\newtheorem{claim}{Claim}
\newtheorem{insight}{Insight}
\newtheorem{corollary}{Corollary}[claim]
\newtheorem{lemma}{Lemma}
\begin{document}

\title{Does Parking Matter? The Impact of Search Time for Parking on Last-Mile Delivery Optimization}

\author{\textsc{A Preprint}}




\maketitle

\begin{tabular}{ccc}
Sara Reed$^1$ & Ann Melissa Campbell$^2$ & Barrett W. Thomas$^2$ \\
\texttt{sara.reed@ku.edu} &  \texttt{ann-campbell@uiowa.edu} &  \texttt{barrett-thomas@uiowa.edu}
\end{tabular}

\begin{center}
\footnotesize
  $^1$ School of Business, University of Kansas, Lawrence, Kansas \par  
  \addvspace{\medskipamount}
  $^2$ Department of Business Analytics, University of Iowa, Iowa City, Iowa
\end{center}

\begin{abstract}
Parking is a necessary component of traditional last-mile delivery practices, but finding parking can be difficult. Yet, the routing literature largely does not account for the need to find parking. In this paper, we address this challenge of finding parking through the Capacitated Delivery Problem with Parking (CDPP). Unlike other models in the literature, the CDPP accounts for the search time for parking in the objective and minimizes the completion time of the delivery tour. When we restrict the customer geography to a complete grid, we identify conditions for when a Traveling Salesman Problem (TSP) solution that parks at each customer is an optimal solution to the CDPP. We then determine when the search time for parking is large enough for the CDPP optimal solution to differ from this TSP solution. We also identify model improvements that allow reasonably-sized instances of the CDPP to be solved exactly. We introduce a heuristic for the CDPP that quickly finds high quality solutions to large instances. Computational experiments show that parking matters in last-mile delivery optimization. The CDPP outperforms industry practice and models in the literature showing the greatest advantage when the search time for parking is high. This analysis provides immediate ways to improve routing in last-mile delivery.
\end{abstract}

\section{Introduction}

Recent studies find that delivery drivers spend on average 5.8 minutes and 24 minutes searching for each parking spot in Seattle and New York City, respectively \citep{Cruise_for_parking, Survey_Drivers}. In urban environments across the United States, a driver spends on average nine minutes searching for on-street parking \citep{INRIXParking}. Increasing levels of e-commerce leading to greater demands on delivery drivers further exacerbates the problem of parking. In New York City, more than 1.5 million packages need to be delivered every day \citep{NYPackages}. These volumes are expected to increase 68\% by 2045 \citep{IncreaseNY}. Attempts to improve productivity in last-mile delivery often explore the use of new technology, such as drones or autonomous vehicles \citep{DroneRouting,AutonomousGrid}. However, it may be many years before the deployment of such technologies, and we need to look for immediate ways to improve last-mile delivery to handle the exploding demand for such deliveries.

Despite the significant amount of time currently spent looking for parking and the growing delivery volumes that will increase search times further, the last-mile routing literature largely ignores the search time for parking in route planning. However, choosing when and where to park the vehicle is a choice made by the driver, often without the aid of decision support \citep{LastMile_Survey}. Where decision support is available, it is often in the form of a solution to the Traveling Salesman Problem (TSP) which does not consider parking. For example, a routing algorithm used by UPS includes a TSP algorithm, but gives drivers autonomy in making final routing decisions about where to drive, where to park, and where to walk \citep{UPS_Orion}. Only recently have modeling choices in the literature started to consider the need to park the vehicle and the trade-offs between driving and walking in the last-mile, but these studies are restricted to a set of tight assumptions \citep{Partitions, OptimizeServiceSets}.

In this paper, we examine how including the search time for parking impacts optimal routing decisions. We model last-mile delivery practices using the Capacitated Delivery Problem with Parking (CDPP). The CDPP is the problem of serving a set of customers with a vehicle and delivery person. The delivery person must park the vehicle to service customers on foot. The carrying capacity of the delivery person may be restricted based on the number, weight, or volume of packages. The delivery person must return to the parked vehicle after making deliveries on foot. Once returned to the vehicle, the delivery person can load more packages or drive to the next parking spot. Unlike other vehicle routing problems in the literature, the CDPP accounts for the search time for parking in the objective when minimizing the completion time of the delivery tour. In addition, the delivery person in the CDPP can serve multiple customer sets from the same parking spot. 

These generalizations relative to the current literature in last-mile delivery make the mixed integer programming (MIP) formulation for the CDPP difficult to solve. To analyze the impact of the search time for parking, we develop new technology to solve the CDPP to optimality. We exploit structure in optimal solutions to identify valid inequalities that raise the lower bound of the MIP and variable reduction techniques that reduce the large number of variables present in the model. These techniques allow us to solve larger instances than those present in the literature. For instances that face computational limitations, we provide a heuristic for the CDPP that finds high quality solutions quickly.  

To understand the impact of considering the search time for parking in routing decisions, we benchmark the CDPP with multiple other vehicle routing problems. The simplest comparison is to use the solution to the TSP with respect to driving times and assume the delivery person parks at every customer. When we restrict the customer geography to a complete grid, we identify conditions for when this TSP solution is an optimal solution to the CDPP. Then, we determine when the search time for parking is large enough for the CDPP optimal solution to differ from this TSP solution. For experimental comparisons, we benchmark the CDPP with models that represent current industry practice as well as recent models in the literature. 

The CDPP was introduced as a benchmark in \cite{AutonomousGrid}. This paper is the first to fully analyze and explore the problem. The contributions of this paper can be summarized as follows:
\begin{itemize}
\item We analyze the first model to include the search time for parking in the objective function to evaluate the impact of the search time for parking on last-mile delivery optimization.
\item By restricting the customer geography to a complete grid, we identify conditions under which following a TSP tour of the customers and parking at each one is an optimal solution to the CDPP as well as the value of the search time for parking that defines change in the structure of an optimal solution to the CDPP.
\item We contribute to solving the CDPP exactly through valid inequalities and variable reduction. 
\item For instances that face computational limitations, we provide a heuristic solution to the CDPP that finds high quality solutions quickly. 
\item We provide valuable insights from computational experiments showing when the consideration of parking in the model makes the greatest impact. 
\end{itemize}

Section \ref{LitReview} reviews the literature specifically addressing the limited work in vehicle routing that considers the  need to park the vehicle. Section \ref{Model} discusses the service times, assumptions, and the MIP formulation for the CDPP. In Section \ref{TSP Analysis}, we directly compare the structure of the CDPP solution with a TSP solution that parks at every customer on a complete grid of customers, providing conditions for when the solutions are equivalent and the value of the search time for parking that defines change in the structure of an optimal solution to the CDPP. Section \ref{ModelImprovements} discusses necessary improvements to the model to be able to solve reasonably-sized instances. Section \ref{Experimental Design} provides the experimental design. In Section \ref{Computational Performance}, we show the effect of the model improvements on computational performance. Section \ref{Experimental Results} presents the experimental results and discusses the impact of considering the search time for parking on the structure of the solution and the completion time of the delivery tour. Section \ref{Heuristic} provides a heuristic  for the CDPP that finds high quality solutions quickly. Conclusions and future work are discussed in Section \ref{Conclusions_FutureWork}.

\section{Literature Review} \label{LitReview}

In this section, we focus on the vehicle routing literature that consider the need to park the vehicle. We begin by summarizing work that introduced the CDPP to the literature. The remaining studies in routing of last-mile delivery do not explicitly consider the search time for parking in the objective function. Instead, they focus on the trade-offs of walking and driving for the delivery person and ignore trade-offs between the difficulty to find parking and other routing decisions.

\cite{AutonomousGrid} introduce the CDPP as a benchmark to the Capacitated Autonomous Vehicle Assisted Delivery Problem (CAVADP). To evaluate the impact of autonomous-assisted delivery in urban environments, \cite{AutonomousGrid} solve the CDPP on a complete grid. However, some instances could not be solved to optimality so the potential sets of customers were restricted to reduce the number of variables. An optimal solution to the CDPP on a complete grid is shown to follow a linear trend in the number of customers. \cite{AutonomousGeneral} use the CDPP as a benchmark when exploring the CAVADP on a general graph to represent urban to rural settings. These papers use the CDPP as a benchmark and do not explore the CDPP beyond stating and modeling the problem. In this paper, we present model improvements to the CDPP and focus on the impact of including the search time for parking in the objective function. The conclusions of this paper support the use of the CDPP as a benchmark problem to represent delivery practices where the delivery person must park the vehicle prior to servicing customers on foot. Further, while the CAVADP shows great promise for autonomous-assisted delivery, it may be many years before the deployment of such technologies. The analysis of this paper provides immediate ways to improve routing in last-mile delivery.

The closest problem to the CDPP is the two-echelon last-mile delivery system introduced by \cite{OptimizeServiceSets}. In this system, the decisions are the locations where the vehicle will park, the locations visited by the delivery person on foot, and the order of delivery locations in both the driving and walking routes. The carrying capacity of the delivery person restricts the volume and weight of packages in a customer set. \cite{OptimizeServiceSets} also require that one node within each customer set be designated as the parking location. A generalization for the CDPP relative to \cite{OptimizeServiceSets} is the  ability to serve multiple customer service sets from the same parking spot. In addition, this paper provides a model formulation for the CDPP that allows the parking locations to differ from customer locations. \cite{OptimizeServiceSets} include the clustering of customers as a decision in the optimization problem, but do not include the search time for parking in the objective function. Instead, the objective is a weighted sum of the driving time and walking time for the delivery person. This paper uses the objective function of \cite{OptimizeServiceSets} to benchmark the objective function in the CDPP. Comparing the solutions with the objective value in \cite{OptimizeServiceSets} to the solutions using an objective value that includes the search time for parking highlights the impact of the search time for parking on optimal routing decisions. \cite{OptimizeServiceSets} use a branch-and-cut algorithm to solve instances up to 30 customers. This paper introduces model improvements for the CDPP to solve larger instances than the instances in \cite{OptimizeServiceSets}.

\cite{Partitions} define a two-level clustered routing problem to distinguish between the driving route and walking routes of the delivery person. Service time to customers is restricted by time windows. The grouping of customers is an input to the model. Two different partitions of customers are considered: one based on observations of drivers from a case study in London and one based on geographical proximity. Each cluster is required to have a parking location within the cluster. The CDPP generalizes this approach by making the partition of customers and the parking locations optimization decisions. In addition, the CDPP allows multiple service sets from the same parking spot. The optimization decisions in \cite{Partitions} are to select the parking location in each cluster, route the vehicle between the parking locations and depot, and route the walking of the delivery person in each cluster. These decisions capture the delivery person's walk back to the parking location but fail to capture the advantages of serving more than one customer set per parking location if the search time for parking is high.  \cite{Partitions} solve the mixed integer programming formulation for the case study in London and observe that optimizing with respect to time windows can reduce total operation time. However, the search time for parking is not considered in the operation time. We leave the consideration of time windows in the CDPP for future work and focus this paper on the impact that the search time for parking has on the total time of the delivery tour and the structure of the solution.

More generally, the CDPP can be related to the two-echelon routing problem. In the application of last-mile delivery, one echelon refers to the driving route and the second echelon refers to the walking routes of the delivery person. \cite{TwoEchelon} provide a survey on two-echelon routing problems. The single truck-and-trailer routing problem with satellite depots (STTRPSD) is similar to the CDPP with the trailer representing the vehicle and the truck representing the delivery person. The truck and trailer are routed on a subset of the satellites (i.e. parking locations for the CDPP) and then the customers are visited from the truck (i.e. delivery person) with routes at each satellite. \cite{STTRPSD} present a branch-and-cut algorithm to solve the STTRPSD. \cite{STTRPSD} solve all instances up to 50 customers to optimality and solve 100 to 200 customers to an average optimality gap of 3.02\%. The test instances consider at most 10 satellites with 25 to 50 customers and at most 20 satellites with 100 to 200 customers.  In our case, we consider all customer locations to be available parking spots (i.e. satellite locations) significantly increasing the size of the model. Therefore, we introduce necessary model improvements to be able to solve the CDPP to optimality. These model improvements may have application in the truck-and-trailer routing problem.

Unlike the STTRPSD and other two-echelon models for last-mile delivery, the CDPP considers a search time in finding a parking location. This feature aligns the CDPP with the two-echelon capacitated location-routing problem (2E-CLRP) where an opening cost is associated with the satellites (i.e. parking locations). \cite{LRP-2E} present three mixed integer programming formulations for the 2E-CLRP. The 2E-CLRP considers two different fleets of vehicles for first-level and second-level trips, connected by the satellites for transshipment operations. Therefore, the 2E-CLRP can decompose into two different capacitated location routing problems \citep{2E-CLRP_decomposition}. However, in the CDPP, there exists a dependence between the two levels. The vehicle must remain at the parking spot on the first-level route while the delivery person serves potentially multiple second-level routes on foot. In addition, the travel times in the two levels differ and we capture these differences by using real-world data for the driving times and walking times between customers. For the application of last-mile delivery, the CDPP balances the trade-offs of the delivery person walking and driving to find a new parking location, so differing travel times influence the solution structure. \cite{LRP-2E_GRASP} and \cite{LRP-2E_metaheuristic} propose metaheuristic approaches to solve the 2E-CLRP. The heuristic solution proposed in this paper may be applicable to other problems that align with the 2E-CLRP where there is a dependence between the two echelons.

Another line of research uses simulation to model parking availability and the impacts on commercial vehicle parking behavior \citep{Microscopic_Simulation, Traffic_Microsimulation}. \cite{Figliozzi_Tipagornwong} combine queuing and logistical models to model parking availability but use continuous approximation models to estimate routing constraints. These papers show that parking has an impact on operations, but still do not consider parking in the routing optimization. In this paper, we argue the search time for parking makes a significant impact on the routing optimization. 

\section{CDPP} \label{Model}

In this section, we detail the service times, assumptions, and MIP for the CDPP. Section \ref{ServiceTimes} defines the problem and provides notation for the parameters and service times. Then, Section \ref{MIP} provides the MIP formulation.

\subsection{Problem Description and Notation} \label{ServiceTimes}

The CDPP serves a set of $n$ customers in a set $C$ by a delivery person with a vehicle. The delivery person and vehicle start and end the tour at the depot, denoted as $0$. The delivery person must park the vehicle to service customers on foot. Let $\Pi$ be the set of parking locations and depot. The search time for parking at parking location $i \in \Pi\setminus \{0\}$ is $p_i$ minutes. Once parked, the delivery person services a set of customers on foot.  After servicing a customer set on foot, the delivery person returns to the parked vehicle. The delivery person can serve another set of customers from this parking spot or move to a different parking spot (e.g. $k \in \Pi$) incurring a search time for parking of $p_{k}$ minutes.

Each customer $i$ requires a single package delivery with weight $\psi_i$ and volume $v_i$. Multiple customer nodes at the same customer location may represent a single customer ordering multiple packages. Let $S$ be the set of potential customer service sets. The capacity of the delivery person may restrict the number of packages, weight, or volume of the service sets in $S$. For example, if the delivery person can service at most $q$ packages at a time, then $|\sigma_j|\leq q$ for all sets $\sigma_j \in S$. Similarly, if the delivery can service at most $\Psi$ in weight or $V$ in volume, then $\sum_{i \in \sigma_j} \psi_i \leq \Psi$ or $\sum_{i \in \sigma_j} v_i \leq V$, respectively, for all sets $\sigma_j \in S$. For each $i\in C$, let $J_i = \{\sigma_j \in S |i \in \sigma_j\}$ be the set of customer service sets that includes customer $i$. Define $I_{ij} = 1$ if $\sigma_j \in J_i$ for all $i \in C$ and $\sigma_j \in S$, and 0 otherwise. Table \ref{ParametersDescription} summarizes the parameters for the CDPP. 

\begin{table}[h]
\renewcommand{\arraystretch}{0.75}
\begin{tabularx}{\linewidth}{ll} 
\toprule
 \textbf{Notation} & \textbf{Description} \\ 
  \midrule
$n$ & Number of customers \\
$C$ & Set of customer locations \\
$S$ & Set of customer service sets \\
$\Pi$ & Set of parking locations and depot \\
$p_i$ & Expected search time for parking at parking location $i \in \Pi$ (minutes)\\
$q$ & Capacity of delivery person (number of packages)\\
$\Psi$ & Weight capacity of delivery person \\
$\psi_i$ & Weight of package for customer $i \in C$ \\
$V$ & Volume capacity of delivery person\\
$v_i$ & Weight of package for customer $i \in C$ \\
$J_i$ & Set of customer sets that include customer $i$ for $i \in C$ \\
$I_{ij}$ & Indicator variable that customer $i$ is in service set $\sigma_j$ for $i \in C, \sigma_j \in S$ \\
  \bottomrule 
\end{tabularx}
  \caption{Set of parameters.} \label{ParametersDescription}
  \end{table}

The service times require more detail in their definition. Table \ref{CDPPCostsDescription} summarizes the service times for the CDPP. Let $D(i,k)$ be the time to drive between locations $i$ and $k$ for $i,k \in \Pi$. We assume driving times satisfy the triangle inequality. Let $d_{ik}$ be the time to drive from location $i$ to location $k$ and park at $k$. Then, $d_{ik} = D(i,k) + p_k$ for $i,k \in \Pi$ such that $i \neq k$. In the case where $k=0$ (i.e. the return to the depot), the vehicle does not need to search for parking and $d_{i0} = D(i,0)$ for all $i \in \Pi \setminus \{0\}$. 

Let $W(i,k)$ be the time to walk between locations $i$ and $k$ for $i,k \in C \cup \Pi \setminus \{0\}$. We assume walking times satisfy the triangle inequality. Let $w_{ij}$ be the shortest walking time to service set $\sigma_j$ when parked at parking location $i$ for $i \in \Pi \setminus \{0\}, \sigma_j \in S$. This walking time is the shortest walk from parking location $i$ to the first customer to be served in set $\sigma_j$, the walk between customers in $\sigma_j$, and the walk back to customer $i$ where the vehicle is parked. For the pair $(i,\sigma_j) \in \Pi \setminus \{0\} \times S$, let $(c_1,c_2,...,c_{|\sigma_j|})$ be an optimal order to serve $\sigma_j$ when parked at $i$. Then, $w_{ij} = W(i,c_1) + W(c_1,c_2) + \cdots + W(c_{|\sigma_j|-1}, c_{|\sigma_j|}) + W(c_{|\sigma_j|}, i)$. 

Let $f_j$ be the time to load package(s) to service set $\sigma_j$. We consider a loading time linearly dependent  on the number of packages in the service set, i.e. $f_j = f\cdot |\sigma_j|$ for some $f \geq 0$. Therefore, the total loading time in the delivery tour is a constant $nf$, and the solution to the CDPP is equivalent to the solution when $f_j \equiv 0$ or $f=0$. 

\begin{table} [h]
\renewcommand{\arraystretch}{0.75}
\begin{tabularx}{\linewidth}{ll} 
\toprule
  \textbf{Notation} & \textbf{Description} \\ 
  \midrule
$D(i,k)$ & Time to drive from $i$ to $k$ for $i,k \in \Pi$ (min) \\
$d_{ik}$ & Time to drive from $i$ to $k$ and park at $k$ for $i \in \Pi, k \in \Pi \setminus \{0\}$ such that $i \neq k$ (min) \\
$d_{i0}$ & Time to drive from parking location $i$ to depot for $i \in \Pi\setminus \{0\}$ (min)\\
$W(i,k)$ & Time to walk from $i$ to $k$ for $i,k \in C \cup \Pi \setminus \{0\}$ (min) \\ 
$w_{ij}$ & Time to walk and serve set $\sigma_j$ while parked at location $i$ for $i \in \Pi \setminus \{0\}$, $\sigma_j \in S$ (min) \\
$f$ & Time to load one package (min)\\
$f_j$ & Time to load packages for customer set $\sigma_j$ for $\sigma_j \in S$ (min)\\
  \bottomrule 
\end{tabularx}
  \caption{Definition of service times in CDPP.} \label{CDPPCostsDescription}
  \end{table}

\subsection{MIP Formulation} \label{MIP}

\cite{AutonomousGrid} introduce a variant of the CDPP presented here and provide a MIP formulation. \cite{AutonomousGrid} take $f_j \equiv g$ for some constant $g\geq0$. For the purposes of clarity and updated notation, we provide the general formulation here. We also adapt single commodity subtour elimination constraints, as opposed to the adapted MTZ subtour elimination constraints presented in \cite{AutonomousGrid}.  Table \ref{VariablesCDPP} summarizes the decision variables for the model. 

\begin{table} [h]
\renewcommand{\arraystretch}{0.75}
\begin{tabularx}{\linewidth}{ll} 
\toprule
  \textbf{Notation} & \textbf{Description} \\ 
  \midrule
$x_{ik}$ & $x_{ik}=1$ if the vehicle drives from $i$ to $k$ and parks at parking location $k$ (if $k \neq 0$) for  \\
& $i,k \in \Pi$ such that $i \neq k$\\
$y_{ij}$ & $y_{ij} =1$ if the delivery person is parked at parking location $i$ and serves set $\sigma_j$ \\
& for $i \in \Pi$ and $\sigma_j \in S$ \\
$v_{ik}$ & Flow of packages from location $i$ to location $k$ for $i \in \Pi$ and $k \in \Pi \setminus \{0\}$ such that $i \neq k$ \\
  \bottomrule 
\end{tabularx}
  \caption{Set of decision variables in CDPP.} \label{VariablesCDPP}
  \end{table} 

\begin{align}
\min & \sum_{i\in \Pi} \sum_{k\in \Pi \setminus \{i\}} x_{ik}d_{ik}+ \sum_{i \in \Pi \setminus \{0\}} \sum_{\sigma_j \in S} y_{ij}(w_{ij} + f_j) \label{CDPPobj}\\
\textrm{s.t.} & \sum_{i\in \Pi} x_{0i} = 1 \label{CDPPdepotFROM}\\
& \sum_{i\in \Pi} x_{i0} = 1 \label{CDPPdepotTO}\\
& \sum_{k\in \Pi \setminus \{0\}} \sum_{\sigma_j \in J_i} y_{kj} = 1  && \forall i \in C \label{CDPPMember}\\
& \sum_{k \in \Pi} \setminus \{i\} x_{ki} = \sum_{k \in \Pi\setminus \{i\}} x_{ik} && \forall i \in \Pi \setminus \{0\} \label{CDPPComeLeave} \\
& y_{ij} \leq \sum_{k \in \Pi \setminus \{i\}} x_{ki} && \forall i \in \Pi\setminus \{0\} , \sigma_j \in S \label{CDPPVisit}\\
\label{Alternative_Subtour1}
&\sum_{i \in \Pi \setminus \{0\}} v_{0i} = n && \\  \label{Alternative_Subtour2}
&v_{ik} \leq n\cdot x_{ik} && \forall i \in \Pi, k \in \Pi \setminus \{0\} \textrm{ s.t. } i \neq k \\ \label{Alternative_Subtour3}
 &\sum_{k \in \Pi \setminus \{i\}} v_{ki} - \sum_{k \in \Pi \setminus \{0,i\}} v_{ik} = \sum_{\sigma_j\in S}|\sigma_j| y_{ij} && \forall i \in \Pi \\ \label{Alternative_Subtour4}
&v_{ik} \in \mathbb{Z}_+ && \forall i \in \Pi, k \in \Pi \setminus \{0\} \textrm{ s.t. } i \neq k \\
& x_{ik} \in \{0,1\} && \forall i,k \in \Pi \textrm{ s.t. } i \neq k \label{CDPPx} \\
& y_{ij} \in \{0,1\} && \forall i \in \Pi \setminus \{0\}, \sigma_j \in S\label{CDPPy} 
\end{align}

\noindent The objective function in Equation \eqref{CDPPobj} minimizes the completion time of the delivery tour. The first term includes the driving time and search time for parking. The second term is the walking and service time for the delivery person. Constraints \eqref{CDPPdepotFROM} and \eqref{CDPPdepotTO} require the vehicle to leave from the depot and return to the depot, respectively. Constraints \eqref{CDPPMember} require that each customer is served in a service set. When the delivery person parks at a parking location, Constraints \eqref{CDPPComeLeave} ensure that the vehicle leaves that parking location. Given that the delivery person services set $\sigma_j$ when parked at parking location $i$ (i.e. $y_{ij}=1$), Constraints \eqref{CDPPVisit} require the vehicle to drive to and park at location $i$. Constraints \eqref{Alternative_Subtour1}-\eqref{Alternative_Subtour4} provide the adapted single commodity subtour elimination constraints. Constraints \eqref{Alternative_Subtour1} and \eqref{Alternative_Subtour2} ensure $n$ packages flow through the network.  While at parking location $i$, the flow should change by the number of customers in all  service sets served from parking spot $i$. Constraints \eqref{Alternative_Subtour3} capture the change in flow. The integer constraints on the $v_{ik}$ variables are given in Constraints \eqref{Alternative_Subtour4}. Finally, the binary constraints on variables $x_{ik}$ and $y_{ij}$ are given in Constraints \eqref{CDPPx} and \eqref{CDPPy}, respectively.

\section{When is the TSP optimal?} \label{TSP Analysis}

A TSP solution parking at every customer location is a feasible solution to the CDPP. Because driving is often faster than walking, this TSP solution may be optimal if the search time for parking is low, particularly when customers are further apart. However, when customers are close together, a small search time for parking may make it advantageous for the delivery person to consolidate packages into larger customer service sets to reduce the number of times the delivery person searches for parking. Therefore, we expect the density of customer locations in combination with the search time for parking to impact the structure of the CDPP solution.

In this section, we identify when a TSP solution parking at every customer location is optimal for the CDPP. We analyze the trade-offs between the density of customers and the search time for parking by considering a complete grid of customers and determining when the search time for parking is large enough that an optimal solution to the CDPP is no longer this TSP solution. For the purpose of this analysis,  we restrict the setting to a $\sqrt{n} \times \sqrt{n}$ complete grid of $n$ customers where $\sqrt{n}$ is even. We assume $\Pi = C \cup 0$ and take $p_k =p$ for all $k \in \Pi \setminus \{0\}$. Let $\hat{d}$ be the time to drive a unit and $\hat{w}$ be the time to walk a unit where $\hat{d} \leq \hat{w}$. The length of a block is $\hat{l}$ units. The capacity of the delivery person is based on the number of packages $q$ assuming that each customer has a single package. Let $(0,0)$ represent the location of the depot with the bottom left corner of the grid at $(1,1)$, bottom right corner at $(\sqrt{n},1)$, top left corner at $(1,\sqrt{n})$, and top right corner at $(\sqrt{n}, \sqrt{n})$.

We first contemplate the case of a TSP tour through the set of customer locations parking at every customer. Figure \ref{6_6_Grid} shows an example of this TSP solution on a $6\times 6$ grid of customers. The black square indicates the location of the depot and circles represent the customer locations. The solid blue lines represent the path of the vehicle. A red square designates the customer location as a parking spot. Lemma \ref{TSP_obj_value} characterizes the completion time of a TSP tour through the grid parking at every customer. The result follows from the analysis in \cite{AutonomousGrid}.

\begin{lemma} \label{TSP_obj_value}
Consider a TSP solution where the delivery person parks at every customer location. The objective value for this solution on a $\sqrt{n} \times \sqrt{n}$ complete grid of customers when $\sqrt{n}$ is even is 
\begin{align} \label{TSP_obj_value_eq}
(2 \cdot MinDistance + n)\hat{d}\hat{l} + nf + np
\end{align}
where $MinDistance = \min_{c\in C} D(0,c)$ is the minimum distance between the depot and grid.
\end{lemma}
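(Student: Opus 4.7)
The plan is to decompose the objective \eqref{CDPPobj} into its natural pieces for the specific TSP-at-every-customer solution and evaluate each piece directly. In this solution every selected service set $\sigma_j$ has size one and the delivery person is parked at the unique customer in that set, so $w_{ij} = 0$ for every active $(i,\sigma_j)$ pair (the ``walk'' is from the parking location to itself and back). The loading contribution is then simply $\sum_{c \in C} f \cdot 1 = nf$. For the search-for-parking contribution, the vehicle enters each of the $n$ non-depot parking locations exactly once, each contributing $p$ to the corresponding $d_{ik}$, while the single return edge to the depot contributes no parking search by the definition of $d_{i0}$. Hence parking search accounts for exactly $np$.

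The remaining work is to evaluate the $D(\cdot,\cdot)$ portion of the driving term, which is where the grid structure and the even-side assumption enter. I would exhibit an explicit tour and show its length is $(2\cdot MinDistance + n)\hat{d}\hat{l}$. Let $c^{*} \in \arg\min_{c \in C} D(0,c)$, so $D(0,c^{*}) = MinDistance \cdot \hat{d}\hat{l}$ under the block convention used in the lemma statement. The tour is: depot $\to c^{*}$, then a Hamiltonian cycle through the $n$ grid customers returning to $c^{*}$, then $c^{*} \to$ depot. The key combinatorial ingredient is that because $\sqrt{n}$ is even, the complete $\sqrt{n}\times\sqrt{n}$ grid is bipartite with equal color classes and admits a Hamiltonian cycle consisting of exactly $n$ unit-block edges; a standard boustrophedon-with-return (``comb'') construction makes this explicit. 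Adding the two depot-to-grid crossings gives a $D(\cdot,\cdot)$-contribution of $(2\cdot MinDistance + n)\hat{d}\hat{l}$, and summing with the walking, loading, and parking-search pieces gives \eqref{TSP_obj_value_eq}.

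The only non-routine step is the Hamiltonian-cycle construction together with the argument that this tour is best possible among TSP tours that park at every customer. Optimality follows from the triangle inequality applied twice: any cycle through all $n$ grid customers together with the depot must leave and re-enter the grid, contributing at least $2 \cdot MinDistance \cdot \hat{d}\hat{l}$ in external driving, and any closed walk visiting all $n$ grid vertices and returning to its starting grid vertex must use at least $n$ grid edges, each of time $\hat{d}\hat{l}$, since the grid edges have uniform length and any shorter cycle would violate Hamiltonicity. The even-side hypothesis is precisely what is needed to realize both bounds simultaneously with a true Hamiltonian cycle rather than a Hamiltonian path that forces an asymmetric exit point. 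As the lemma statement notes, the grid-tour calculation already appears in \cite{AutonomousGrid}, so the proof is essentially a citation of that calculation combined with the bookkeeping above.
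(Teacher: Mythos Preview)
Your bookkeeping decomposition --- walking contribution $=0$, loading $=nf$, parking search $=np$ --- is correct and is exactly what the paper relies on; the paper itself does not prove the lemma beyond stating that the result follows from the analysis in \cite{AutonomousGrid}. So at the level of approach you match the paper.

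There is, however, an imprecision in your driving-tour argument worth repairing. The route depot $\to c^{*} \to$ (Hamiltonian cycle on the grid) $\to c^{*} \to$ depot visits $c^{*}$ twice and is therefore not a TSP tour. Relatedly, your lower-bound split ``$2\cdot MinDistance$ external $+$ $n$ internal'' is not simultaneously achievable by a simple cycle: the closest grid customer to the depot is unique (the paper invokes exactly this from \cite{AutonomousGrid} in the proof of Claim~\ref{Grid_q2}), so one depot edge costs $MinDistance$ while the other must cost at least $MinDistance+1$. The construction that actually attains the bound is depot $\to c^{*}$, then a Hamiltonian \emph{path} through all $n$ grid customers ending at a grid neighbor of $c^{*}$ (this is where even $\sqrt{n}$ enters), then return to the depot; the arithmetic is $MinDistance + (n-1) + (MinDistance+1) = 2\cdot MinDistance + n$, the same total via a different split. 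Since you already defer to \cite{AutonomousGrid} for this calculation the issue is cosmetic, but you should correct the tour description and the matching lower-bound decomposition so that both pieces are individually tight.
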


\begin{figure}[h]
  \centering
  \includegraphics[scale = 0.3]{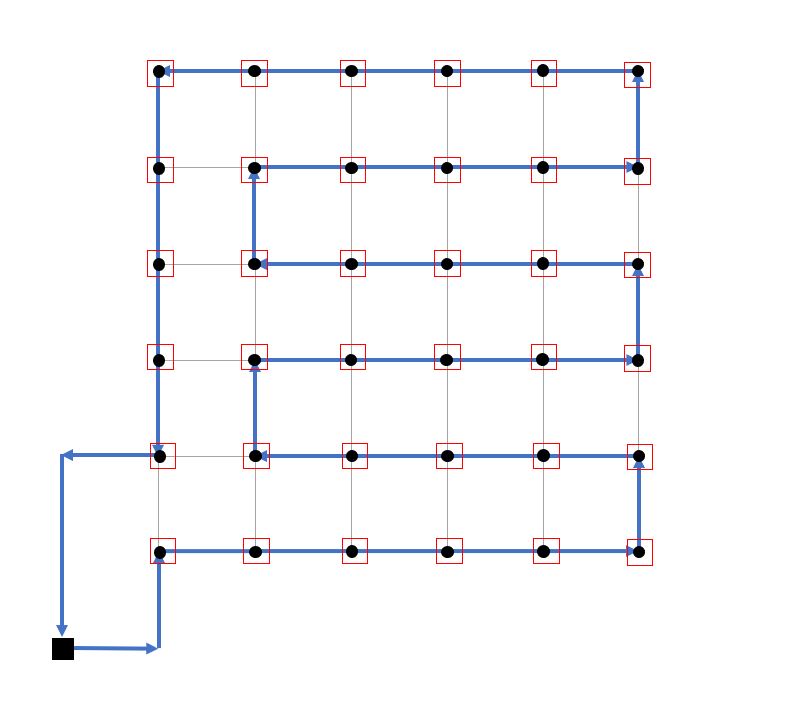}
  \caption{An example of the TSP solution parking at every customer on a $6 \times 6$ grid of customers. }
\label{6_6_Grid}
\end{figure}

Next, we identify nonzero search times for parking where an optimal solution to the CDPP is this TSP solution parking at every customer location. We also determine when $p$ becomes large enough that this TSP solution is not optimal. Claim \ref{Grid_q2} summarizes these results when $q\leq 2$. This and all other proofs can be found in Appendix \ref{Proofs}.

\begin{claim} \label{Grid_q2}
Assume $q \leq 2$. Then,
\begin{enumerate}[(a)]
\item if $p \leq \hat{l}(2\hat{w}-\hat{d})$, then an optimal solution to the CDPP is a TSP solution parking at every customer;
\item if $p > \hat{l}(2\hat{w}-\hat{d})$, then a TSP solution parking at every customer is not an optimal solution to the CDPP.
\end{enumerate}
\end{claim}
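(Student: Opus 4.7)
My plan is to prove parts (a) and (b) separately. Part (b) admits a short constructive proof by exhibiting a feasible solution strictly cheaper than TSP when $p > \hat{l}(2\hat{w} - \hat{d})$; part (a) requires a global lower bound on any feasible solution.

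For part (b), I would modify the TSP by consolidating a pair of adjacent rows. Concretely, for two consecutive rows (say rows $2j{-}1$ and $2j$), I would park at every customer of the odd-indexed row and, from each parking spot, walk vertically to the corresponding even-indexed-row customer. Comparing this local modification to the TSP on the same two rows: TSP drives $2\sqrt{n}-1$ unit edges (the two rows together with the single vertical edge between them) and parks $2\sqrt{n}$ times, whereas the modified tour drives only $\sqrt{n}-1$ unit edges in the odd row, parks only $\sqrt{n}$ times, and pays in return $2\sqrt{n}\hat{l}\hat{w}$ in walking. Subtracting, the change (modified minus TSP) works out to
\[
-\sqrt{n}\hat{l}\hat{d} - \sqrt{n}\,p + 2\sqrt{n}\hat{l}\hat{w} \;=\; \sqrt{n}\,\bigl[\hat{l}(2\hat{w}-\hat{d}) - p\bigr],
\]
which is strictly negative exactly when $p > \hat{l}(2\hat{w}-\hat{d})$, so TSP is not optimal in that regime.

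For part (a), let $S$ be an arbitrary feasible solution with $b$ walked (not parked-at) customers; it has $k = n-b$ parking stops. Because $q \leq 2$, each walked customer is served by a single round-trip walk from its parking, and since the minimum grid distance is $\hat{l}$, the total walking cost satisfies $W \geq 2b\hat{l}\hat{w}$. For the driving cost $D$, I would argue by insertion: for each walked customer $c$, insert $c$ into the parking tour next to its parking $\rho(c)$, obtaining a genuine TSP tour of length at most $D + b\hat{l}\hat{d}$. Since a valid TSP has length at least $D_{\text{TSP}} = (n + 2\,MinDistance)\hat{l}\hat{d}$ from Lemma~\ref{TSP_obj_value}, this gives $D \geq D_{\text{TSP}} - b\hat{l}\hat{d}$. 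Combining,
\begin{align*}
\mathrm{cost}(S) \;&=\; D + (n-b)p + W \\
  &\geq\; [D_{\text{TSP}} - b\hat{l}\hat{d}] + (n-b)p + 2b\hat{l}\hat{w} \\
  &=\; D_{\text{TSP}} + np + b\,\bigl[\hat{l}(2\hat{w}-\hat{d}) - p\bigr],
\end{align*}
and under the hypothesis $p \leq \hat{l}(2\hat{w}-\hat{d})$ the bracketed term is $\geq 0$, so $\mathrm{cost}(S) \geq D_{\text{TSP}} + np = \mathrm{cost}(\text{TSP})$, proving (a).

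The main obstacle is establishing the driving lower bound $D \geq D_{\text{TSP}} - b\hat{l}\hat{d}$, i.e.\ an amortized insertion cost of at most $\hat{l}\hat{d}$ per walked customer. A naive per-edge count (each tour edge $\geq \hat{l}\hat{d}$, each depot edge $\geq MinDistance \cdot \hat{l}\hat{d}$) is short by one $\hat{l}\hat{d}$, and in the worst case an individual insertion can cost $2\hat{l}\hat{d}$ (when the walked customer sits perpendicular to the direction of travel at its parking). Tightening to $\hat{l}\hat{d}$ on average requires exploiting the Manhattan geometry of the grid: with $b$ walked customers removed, the parking tour must have enough structural slack (length-two hops, turns, or collinearities) that walked customers can be assigned to tour edges through which they lie, for an amortized insertion cost of a single unit edge. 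Making this precise via a charging or matching-style argument between walked customers and tour edges is the technical crux of the proof.
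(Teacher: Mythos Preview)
Your part (b) is essentially the paper's construction: delete one row from the driving tour and serve its $\sqrt{n}$ customers by walking from the adjacent row, giving the saving $\sqrt{n}[\hat{l}(2\hat{w}-\hat{d})-p]$. The paper removes the top row and routes the vehicle on a Hamiltonian path through the remaining $\sqrt{n}\times(\sqrt{n}-1)$ subgrid; if you instead pick an interior pair of rows, check that reconnecting the snake to the rows above does not cost an extra unit edge, or your difference will be off by $\hat{l}\hat{d}$.

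For part (a), your inequality chain and the walking bound $W\ge 2b\,\hat{l}\hat{w}$ match the paper exactly, and your target driving bound $D\ge D_{\mathrm{TSP}}-b\,\hat{l}\hat{d}$ is literally the same as the paper's bound $D\ge (2\cdot MinDistance+k)\,\hat{l}\hat{d}$ with $k=n-b$. The divergence is in how you try to prove it. You attempt to insert each walked customer back into the parking tour at amortized cost $\le \hat{l}\hat{d}$, and you correctly flag that this is where the argument stalls (a single insertion can cost $2\hat{l}\hat{d}$). The paper avoids insertion altogether and just counts edges of the parking tour directly: with $k$ parking stops there are $k-1$ inter-parking edges of length $\ge\hat{l}$, plus two depot legs. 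The ``one unit short'' you noticed is recovered from a simple geometric fact about this grid: the customer closest to the depot is \emph{unique} (the corner $(1,1)$), so for $k\ge 2$ the two depot legs together are at least $(2\cdot MinDistance+1)\hat{l}$, closing the gap. The case $k=1$ is handled separately by observing that with a single parking spot at least one walked customer lies $\ge 2$ blocks away, and the extra $2\hat{l}\hat{w}$ of walking covers the missing $\hat{l}\hat{d}$. So the charging/matching argument you were bracing for is unnecessary; once you stop trying to compare to a full TSP tour and instead lower-bound the parking tour's length by raw edge counting, the obstacle disappears.
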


Increasing the capacity of the delivery person to $q=3$ packages allows for gains from consolidating customers into service sets at lower search times for parking than identified in Claim \ref{Grid_q2}.  Claim \ref{Grid_q2} finds a threshold of $p=\hat{l}(2 \hat{w}-\hat{d})$ for the solution structure of the CDPP to change. When $q=3$, Claim \ref{Grid_q3} reduces this threshold to $p=\hat{l}(\frac{4}{3}\hat{w}-\hat{d})$.

\begin{claim} \label{Grid_q3}
Assume $q=3$. Then,
\begin{enumerate}[(a)]
\item if $p \leq \hat{l}(\frac{4}{3}\hat{w}-\hat{d})$, then an optimal solution to the CDPP is a TSP solution parking at every customer;
\item if $p > \hat{l}(\frac{4}{3}\hat{w}-\hat{d})$, then a TSP solution parking at every customer is not an optimal solution to the CDPP.
\end{enumerate}
\end{claim}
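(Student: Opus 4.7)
The plan is to mirror Claim~\ref{Grid_q2}'s treatment, using Lemma~\ref{TSP_obj_value} as the baseline and quantifying how any deviation from parking-at-every-customer changes the objective. For part~(a), I would argue that, under $p \le \hat{l}(\tfrac{4}{3}\hat{w} - \hat{d})$, no CDPP solution can beat the TSP. The argument proceeds by writing the CDPP objective as the sum of driving, parking-search, and walking terms, and then tracking, for any deviation from parking-at-every-customer, the marginal change in each term. Such a deviation consolidates at least two customers into a single service set served on foot. With capacity $q=3$, the largest service set contains three customers; a capacity-three consolidation saves up to $2p$ in parking search time and up to $2\hat{l}\hat{d}$ in driving time (only when the three consolidated customers sit at the end of a row of the serpentine TSP, so that two one-block in-row edges disappear while the between-row transition is preserved), but must add at least $4\hat{l}\hat{w}$ of walking time. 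Amortizing across the three customers in the service set yields a per-customer bound of $p + \hat{l}\hat{d} - \tfrac{4}{3}\hat{l}\hat{w}$ on the savings, which is nonpositive exactly when $p \le \hat{l}(\tfrac{4}{3}\hat{w} - \hat{d})$, establishing optimality of the TSP.

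For part~(b), I would exhibit an explicit feasible CDPP solution that strictly beats the TSP whenever $p > \hat{l}(\tfrac{4}{3}\hat{w} - \hat{d})$. The candidate is to place one capacity-three service set at the trailing end of each row of the serpentine TSP, parking at the customer third from the end and serving the remaining two customers (the last two in the row) on foot. A direct application of Lemma~\ref{TSP_obj_value} and a row-by-row bookkeeping gives a per-row objective change of $-2p - 2\hat{l}\hat{d} + 4\hat{l}\hat{w}$. Summing over the $\sqrt{n}$ rows and amortizing the per-row change across the three customers served in each consolidation recovers the per-customer inequality from part~(a), which is strictly negative precisely when the threshold is exceeded; this shows the constructed solution strictly improves on the TSP in that regime.

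The main obstacle I anticipate is in part~(a): a general CDPP solution may combine many consolidations, reroute the vehicle outside of the canonical serpentine, and interact with the depot edges. The key technical step is to argue that the per-consolidation bound above is the right one in aggregate—i.e., that no rearrangement of many simultaneous consolidations produces savings larger than the sum of the per-consolidation bounds. I would handle this via a charging scheme that assigns each consolidation's walking and parking savings against a disjoint window of three consecutive customers in the original TSP tour; this reduces the global comparison to the tight local inequality. A separate, constant-sized argument at the two depot edges verifies that the boundary of the serpentine does not create additional slack that could be exploited when $\sqrt{n}$ is even.
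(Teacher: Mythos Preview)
Your construction in part~(b) does not achieve the tight threshold. With the parking customer included in the capacity-three service set, only two customers are walked to, so the per-row change $-2p - 2\hat{l}\hat{d} + 4\hat{l}\hat{w}$ is negative precisely when $p > \hat{l}(2\hat{w}-\hat{d})$, the $q\le 2$ threshold of Claim~\ref{Grid_q2}, not the $q=3$ threshold $\hat{l}(\tfrac{4}{3}\hat{w}-\hat{d})$. (Dividing by three to ``amortize'' does not change the sign.) For $\hat{l}(\tfrac{4}{3}\hat{w}-\hat{d}) < p \le \hat{l}(2\hat{w}-\hat{d})$ your candidate does not beat the TSP, so (b) is unproved there. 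The paper's construction instead parks at a customer and, \emph{from that same spot}, serves a second service set of three \emph{other} customers forming an L-shape (e.g.\ $\{(\sqrt{n}-1,\sqrt{n}),(\sqrt{n},\sqrt{n}),(\sqrt{n},\sqrt{n}-1)\}$ while parked at $(\sqrt{n}-1,\sqrt{n}-1)$). That walk costs exactly $4\hat{l}\hat{w}$ while removing three parking stops and three unit drives, giving a change of $-3p - 3\hat{l}\hat{d} + 4\hat{l}\hat{w}$ per such block, which is negative exactly when $p > \hat{l}(\tfrac{4}{3}\hat{w}-\hat{d})$. The key idea you are missing is that with $q=3$ the delivery person may serve \emph{two} sets from one parking spot, so the tight unit of consolidation is four customers at a stop (one at the spot, three walked), not three.

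The same miscount contaminates part~(a): your stated savings ``up to $2p$ and $2\hat{l}\hat{d}$'' amortized over three customers do not yield $p+\hat{l}\hat{d}-\tfrac{4}{3}\hat{l}\hat{w}$; that expression requires savings of $3p+3\hat{l}\hat{d}$ against $4\hat{l}\hat{w}$, i.e.\ exactly the three-walked-customer block above. Beyond the arithmetic, the charging scheme you sketch is genuinely more delicate than needed. The paper avoids comparing to the serpentine altogether: for any solution with $k$ parking stops it bounds driving below by $(2\cdot MinDistance + k)\hat{l}\hat{d}$ (since $k$ distinct grid points are at least one block apart) and bounds walking below by $(4g+2h)\hat{l}\hat{w}$ where $n-k = 3g+h$ with $h<3$ (since any three non-parking customers require at least a $4$-unit closed walk). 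These two global bounds plug directly into the TSP value from Lemma~\ref{TSP_obj_value} and make the ``interacting consolidations / rerouted vehicle / depot edges'' issues you flag disappear.
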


Now, we discuss the implications of Claims \ref{Grid_q2} and \ref{Grid_q3} on optimal routing decision for last-mile delivery. In both claims, the value of $\hat{l}$, and thus the density of customers, plays a crucial role in determining the structure of the optimal solution to the CDPP. For this analysis, we consider grids representing urban-to-rural settings. We estimate the parameters from the instances presented in \cite{AutonomousGeneral}. We take $\hat{l}=0.07$ miles and $\hat{l} = 0.29$ miles for urban and rural environments, respectively. We estimate $\hat{d}=12.5$  min/mi and $\hat{w}=20$ min/mi based on the driving time and walking time, respectively, in \cite{AutonomousGrid}.

Figure \ref{Cutoff_p} shows the value of the search time for parking $p$ that defines change in the structure of an optimal solution to the CDPP when $q\leq2$ and $q=3$. In urban environments, when customers are closer together, small values for the search time for parking impact the structure of the CDPP solution. If $p>1$ minute, a TSP solution parking at every customer location is not an optimal solution to the CDPP. In an empirical study, \cite{Cruise_for_parking} find that delivery drivers spend on average 5.8 minutes searching for each parking spot in Seattle. This parking time suggests that productivity gains can be achieved in urban environments by considering the search time for parking in routing decisions. In lower density areas, like rural environments, Figure \ref{Cutoff_p} shows that the search time for parking must be higher to change the structure of the optimal solution from a TSP solution parking at every customer. We also expect the driving speed in rural areas to be greater than in urban areas. A higher driving speed produces higher values of $p$ in Claims \ref{Grid_q2} and \ref{Grid_q3} further supporting that a TSP solution parking at every customer remains optimal for higher values of $p$ in low density areas. In Section \ref{Experimental Results}, our experimental results indicate that a TSP solution parking at every customer best approximates optimal solutions to the CDPP in rural environments.

\begin{figure}[h]
  \centering
  \includegraphics[scale = 0.3]{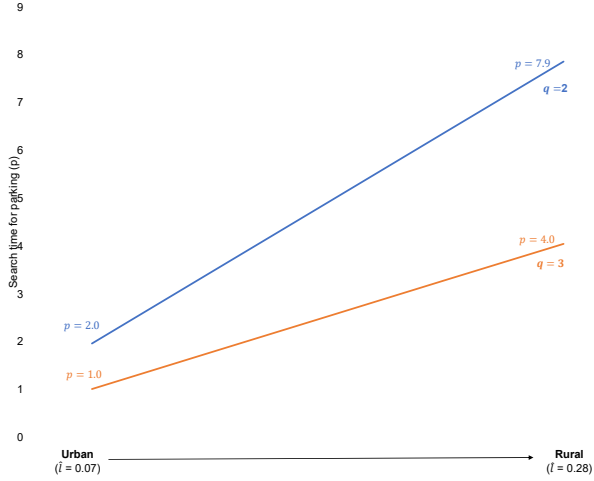}
  \caption{The value of the search time for parking $p$ that determines the optimality of the TSP solution parking at every customer location for the CDPP. }
\label{Cutoff_p}
\end{figure}

\section{Model Improvements} \label{ModelImprovements}

In this section, we develop a series of inequalities and variable reduction techniques that allow us to much more effectively solve the CDPP. Section \ref{Computational Performance} evaluates the impact of these model improvements on computational performance. Appendix \ref{AdditionalModelImprovements} provides model improvements that apply to alternative assumptions regarding the loading time function $f_j$.

The delivery person must park the vehicle in order to service customers on foot. The parking locations and what customer sets to serve from each parking spot are decisions in the optimization problem. If the delivery person parks at customer location $i \in \Pi \cap C$, Claim \ref{park_set} identifies that one of the customer sets will include customer $i$. Recall that the solution to the CDPP when $f_j = f \cdot |\sigma_j|$ for some $f\geq 0$ is equivalent to when $f = 0$. To simplify the proofs in this section, we show the results for $f=0$.

\begin{claim} \label{park_set}
If the delivery person parks at $i \in \Pi \cap C$ (i.e. $x_{ki} = 1$ for some $k \in \Pi \setminus \{i\}$), then there exists $\sigma_j \in J_i$ such that $y_{ij} = 1$, i.e. 
\begin{align}
\sum_{k \in \Pi\setminus \{i\}} x_{ki} = \sum_{j \in J_i} y_{ij}  && \forall i \in \Pi \cap C.
\end{align}
\end{claim}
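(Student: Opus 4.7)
The plan is to establish Claim \ref{park_set} by an exchange argument showing that the equality $\sum_{k \neq i} x_{ki} = \sum_{j \in J_i} y_{ij}$ can be enforced on any optimal solution without increasing the objective. The right-hand side lies in $\{0,1\}$ by \eqref{CDPPMember}, and under standard optimality considerations (triangle inequality on driving times and $p_i \geq 0$), no optimal solution revisits a non-depot node, so the left-hand side is also in $\{0,1\}$. The only obstructing case is therefore $\sum_{k\neq i} x_{ki} = 1$ and $\sum_{j \in J_i} y_{ij} = 0$: the vehicle parks at $i$ but customer $i$ is served from some other parking spot $k^* \neq i$ in a set $\sigma_{j^*} \ni i$ with $y_{k^*, j^*} = 1$.

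The modification I would make is: (i) set $y_{k^*, j^*} = 0$ and, if $\sigma_{j^*} \setminus \{i\} \neq \emptyset$, set $y_{k^*, j'} = 1$ where $\sigma_{j'} = \sigma_{j^*} \setminus \{i\}$; (ii) set $y_{i, j''} = 1$ where $\sigma_{j''} = \{i\}$, which lies in $S$ since a single package trivially respects any capacity; and (iii) shift the single-commodity flow $v$ by one unit along the directed driving subpath connecting $i$ and $k^*$ to restore conservation \eqref{Alternative_Subtour3}. The driving variables $x$ are untouched, so \eqref{CDPPdepotFROM}--\eqref{CDPPComeLeave} and the subtour constraints remain satisfied, and customer $i$ is still served exactly once (now in $\{i\}$ from $i$). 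For the cost comparison, the driving portion of \eqref{CDPPobj} is unchanged; on the walking side, $w_{k^*, j'} \leq w_{k^*, j^*}$ by the triangle inequality on $W$, since removing $i$ from the walking tour rooted at $k^*$ cannot lengthen it, and the new singleton service at $i$ contributes $w_{i, j''} = W(i,i) + W(i,i) = 0$. Hence the modified solution has objective value no greater than the original and satisfies the claimed equality at $i$; iterating over all violating indices yields an optimal solution satisfying the constraint for every $i \in \Pi \cap C$.

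The main obstacle I anticipate is the bookkeeping for the single-commodity flow variables. Because the modification shifts one package of service from $k^*$ to $i$ while leaving the driving cycle intact, each edge on the $i$-to-$k^*$ subpath of that cycle must have its flow adjusted by $\pm 1$, and nonnegativity of the resulting $v_{ab}$ must be checked. This works out because every edge on that subpath originally carried flow at least equal to the total packages delivered at $k^*$, which is at least one since $i \in \sigma_{j^*}$. The remainder reduces to a direct feasibility and cost verification using only the triangle inequality and the fact that singletons always lie in $S$.
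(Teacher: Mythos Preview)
Your argument is correct, but the exchange you perform is not the one the paper uses. The paper's proof leaves the service set $\sigma_{j^*}$ intact and \emph{relocates} it: instead of serving $\sigma_{j^*}$ from the foreign parking spot $l$, it serves the same set from $i$, and shows via the triangle inequality on $W$ that $w_{ij^*} \le w_{lj^*}$ (considering separately whether $l \in \sigma_{j^*}$). You instead \emph{split} the set, peeling $\{i\}$ off as a singleton served at $i$ while the remainder $\sigma_{j^*}\setminus\{i\}$ stays at $k^*$; your cost bound then comes from shortcutting $i$ out of the walk at $k^*$. Both routes are valid and rely only on the triangle inequality for walking times; your version has the advantage that it essentially proves Claim~\ref{f0_Single} simultaneously, whereas the paper obtains that as a separate subsequent step. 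You are also more explicit than the paper about the two easy cases ($\sum_k x_{ki}=0$ and the upper bound $\sum_{j\in J_i} y_{ij}\le 1$ from \eqref{CDPPMember}) and about repairing the commodity-flow variables, which the paper's proof does not discuss at all. One minor point: your flow-repair paragraph treats only the case where flow is decremented along the $i$-to-$k^*$ subpath; when $k^*$ precedes $i$ on the tour you must instead increment along the $k^*$-to-$i$ subpath, and the relevant check is the upper bound $v_{ab}\le n\,x_{ab}$ rather than nonnegativity. That case is equally easy (flow past $k^*$ is at most $n-1$ since at least one package, namely $i$'s, is dropped there), but it is worth stating.
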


Claim \ref{park_set} shows that when the vehicle parks at the location of customer $i \in \Pi \cap C$, one of the service sets at that parking location includes customer $i$. In particular, there exists an optimal solution where customer $i$ is served alone while parked at the location of customer $i$. Claim \ref{f0_Single} formalizes this observation.
 
\begin{claim} \label{f0_Single}
For $i \in \Pi \cap C$, let $\sigma_{j_i} = \{i\}$. There exists an optimal solution where $y_{ij_i} = 1$ for each parking location $i \in \Pi \cap C$ (i.e. $x_{ki} = 1$ for some $k \in \Pi \setminus \{i\}$), i.e. 
\begin{align} \label{f0_Single_Equation}
\sum_{k \in \Pi \setminus \{i\}} x_{ki} = y_{ij_i} && \forall i \in \Pi \cap C.
\end{align}
\end{claim}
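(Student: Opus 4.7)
The plan is to start from an arbitrary optimal solution and, one parking location at a time, transform it into one that satisfies \eqref{f0_Single_Equation} without increasing the objective value. Because the paper has already reduced to the case $f = 0$, the only service-time contribution to worry about is the walking term $w_{ij}$. Claim \ref{park_set} provides the hook: at every $i \in \Pi \cap C$ that is used as a parking spot, some set $\sigma_j \in J_i$ is assigned there with $y_{ij} = 1$. If that set is $\{i\}$ there is nothing to do; otherwise we will ``split off'' customer $i$.

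For such an $i$, define $\sigma_{j'} := \sigma_j \setminus \{i\}$ and recall $\sigma_{j_i} = \{i\}$; both belong to $S$ because the capacity requirements on packages, weight, and volume are monotone in the set. The transformation is to set $y_{ij} = 0$, $y_{ij_i} = 1$, $y_{ij'} = 1$, and leave every $x_{ik}$ and $v_{ik}$ unchanged. Feasibility is then essentially bookkeeping: \eqref{CDPPMember} still holds because $i$ is now covered by $\{i\}$ and each other customer of $\sigma_j$ by $\sigma_{j'}$; \eqref{CDPPVisit} still holds since $i$ remains a used parking location; and the contribution to \eqref{Alternative_Subtour3} at $i$ changes from $|\sigma_j|$ to $1 + (|\sigma_j|-1) = |\sigma_j|$, preserving the flow balance.

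The crux is showing the objective does not change, which reduces to the walking identity $w_{ij_i} + w_{ij'} = w_{ij}$. Clearly $w_{ij_i} = W(i,i) + W(i,i) = 0$. For the other equality I would argue via the triangle inequality that any optimal walking ordering $(c_1,\ldots,c_{|\sigma_j|})$ of $\sigma_j$ from parking location $i$ can be reshuffled so that $c_1 = i$ at no increase in cost: if $i = c_k$, replacing the subpath $c_{k-1} \to i \to c_{k+1}$ by the direct hop $c_{k-1} \to c_{k+1}$ and prepending $i \to c_1$ changes the cost by $W(c_{k-1},c_{k+1}) - W(c_{k-1},i) - W(i,c_{k+1}) \le 0$. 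With $c_1 = i$ the initial term $W(i,c_1) = 0$ and what remains is a tour of $\sigma_{j'}$ from $i$, so $w_{ij} \ge w_{ij'}$; the reverse inequality is immediate by prepending $i$ to an optimal ordering of $\sigma_{j'}$. Hence $w_{ij} = w_{ij'}$ and the swap is cost-neutral.

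Since the transformation only touches variables associated with parking location $i$, it can be applied independently at every used $i \in \Pi \cap C$, and iterating produces an optimal solution satisfying \eqref{f0_Single_Equation}. The main obstacle is the walking-time identity $w_{ij} = w_{ij'}$; once that is pinned down via the triangle-inequality reshuffle above, the rest of the argument is a straightforward feasibility check against the constraints of Section \ref{MIP}.
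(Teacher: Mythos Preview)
Your proposal is correct and follows essentially the same approach as the paper: invoke Claim~\ref{park_set} to obtain some $\sigma_j \in J_i$ with $y_{ij}=1$, split $\sigma_j$ into $\{i\}$ and $\sigma_j\setminus\{i\}$, and observe that the swap is cost-neutral and feasibility-preserving. Your argument is in fact more thorough than the paper's, which simply asserts $w_{ij}=w_{ij_i}+w_{il}$ ``by the definition of $w_{ij}$'' without the triangle-inequality reshuffle you supply, and which also omits the explicit check of Constraints~\eqref{CDPPMember}, \eqref{CDPPVisit}, and \eqref{Alternative_Subtour3}.
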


To strengthen Claim \ref{f0_Single}, we sum over all customers in set $\Pi \cap C$ in Equation \eqref{f0_Single_Equation}. If $\Pi = C$, then we conclude the number of parking spots is equal to the number of singleton customers serviced while parked at its location. Corollary \ref{parkingspots_f0} provides the strengthened result.
\begin{corollary}
\label{parkingspots_f0} 
\begin{align} \label{parkingspots_f0_equation}
\sum_{i \in \Pi \cap C} \sum_{k \in \Pi \setminus \{i\}} x_{ki} = \sum_{i \in \Pi \cap C} y_{ij_i}.
\end{align}
\end{corollary}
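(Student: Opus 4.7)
The plan is to obtain the corollary as an immediate consequence of Claim \ref{f0_Single} by summing its defining equation over all candidate customer-parking locations. Since the previous claim already establishes that, in an optimal solution, equation \eqref{f0_Single_Equation} holds pointwise for every $i \in \Pi \cap C$, the corollary does not require any new structural argument; it is a purely algebraic consequence.

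Concretely, I would start from Claim \ref{f0_Single}, which guarantees the existence of an optimal solution satisfying
\begin{equation*}
\sum_{k \in \Pi \setminus \{i\}} x_{ki} = y_{ij_i} \qquad \forall\, i \in \Pi \cap C,
\end{equation*}
where $\sigma_{j_i} = \{i\}$ is the singleton service set consisting of customer $i$ alone. Summing the left- and right-hand sides over $i \in \Pi \cap C$ yields exactly \eqref{parkingspots_f0_equation}. I would briefly remark on the interpretation: the left-hand side counts the total number of vehicle arrivals to customer-located parking spots, while the right-hand side counts how many times a singleton customer $\sigma_{j_i} = \{i\}$ is served from its own location; Claim \ref{f0_Single} says these quantities agree spot-by-spot, and the corollary simply aggregates this equality across all such spots.

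The only subtlety worth flagging is that the equality in Claim \ref{f0_Single} holds for a particular optimal solution (the one whose existence the claim asserts), not for every optimal solution; thus the summed identity in the corollary should likewise be read as holding in that same optimal solution. Beyond this bookkeeping point, there is no genuine obstacle: the main content lies entirely in Claim \ref{f0_Single}, and the corollary is a one-line aggregation.
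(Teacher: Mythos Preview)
Your proposal is correct and matches the paper's approach exactly: the text preceding the corollary states that one obtains it by summing Equation~\eqref{f0_Single_Equation} over all $i \in \Pi \cap C$, which is precisely what you do. Your added remark that the identity holds for the particular optimal solution furnished by Claim~\ref{f0_Single} is a worthwhile clarification that the paper leaves implicit.
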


We also use Claim \ref{f0_Single} to identify variables that will not be used in an optimal solution and may be removed from the MIP in Section \ref{MIP}. If the vehicle parks at customer $i$, Claim \ref{f0_Single} concludes $\sigma_{j_i} = \{i\}$ is serviced. Constraints \eqref{CDPPVisit} restrict each customer to be in exactly one service set. Therefore, any other set that includes $i$ will not be serviced while the vehicle is parked at customer $i$.  Corollary \ref{f0_y0} formalizes this result.

\begin{corollary} \label{f0_y0}
For all $i \in \Pi \cap C$, $y_{ij} = 0$ for all $\sigma_j \in \{\sigma_j \in J_i\vert \textrm{ } |\sigma_j| \geq 2\}$.
\end{corollary}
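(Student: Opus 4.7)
The plan is to split into two cases based on whether the vehicle is parked at customer location $i \in \Pi \cap C$ in the optimal solution guaranteed by Claim \ref{f0_Single}, and show $y_{ij} = 0$ for every $\sigma_j \in J_i$ with $|\sigma_j| \geq 2$ in each case.

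First I would consider the case in which the vehicle does park at $i$, i.e.\ $\sum_{k \in \Pi \setminus \{i\}} x_{ki} = 1$. By Claim \ref{f0_Single}, the optimal solution we are working with satisfies $y_{i j_i} = 1$ with $\sigma_{j_i} = \{i\}$. Constraint \eqref{CDPPMember} then forces $\sum_{k \in \Pi \setminus \{0\}} \sum_{\sigma_j \in J_i} y_{kj} = 1$, so every other $y_{kj}$ with $\sigma_j \in J_i$ must equal zero; in particular $y_{ij} = 0$ for every $\sigma_j \in J_i$ different from $\sigma_{j_i}$, which includes all $\sigma_j \in J_i$ with $|\sigma_j| \geq 2$.

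Next I would handle the case in which the vehicle does not park at $i$, i.e.\ $\sum_{k \in \Pi \setminus \{i\}} x_{ki} = 0$. Here Constraints \eqref{CDPPVisit} immediately yield $y_{ij} \leq 0$ for every $\sigma_j \in S$, so in particular $y_{ij} = 0$ for every $\sigma_j \in J_i$ with $|\sigma_j| \geq 2$. Combining the two cases gives the claim.

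There is no substantive obstacle here: the result is an almost direct consequence of Claim \ref{f0_Single} together with the assignment constraints \eqref{CDPPMember} and the linking constraints \eqref{CDPPVisit}. The only point worth being careful about is invoking Claim \ref{f0_Single} to pick the specific optimal solution in which the singleton $\sigma_{j_i} = \{i\}$ is chosen whenever $i$ is a parking location, since Corollary \ref{f0_y0} is a statement about the existence of an optimal solution (suitable for variable reduction in the MIP) rather than about every optimal solution.
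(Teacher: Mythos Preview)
Your proposal is correct and follows essentially the same approach as the paper: both split into the two cases (vehicle parks at $i$ versus not), invoke Claim \ref{f0_Single} together with Constraints \eqref{CDPPMember} in the first case, and use Constraints \eqref{CDPPVisit} in the second. Your closing remark about Corollary \ref{f0_y0} being a statement about a particular optimal solution (the one furnished by Claim \ref{f0_Single}) rather than every optimal solution is a worthwhile clarification that the paper leaves implicit.
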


\section{Experimental Design} \label{Experimental Design}

In this section, we introduce the test instances and experimental design. The integer programming model for the CDPP is implemented in Python 3.7.0 using the Gurobi 9.0.0 solver with a 32 thread count on the University of Iowa's Argon high performance computing cluster \citep{Argon}.

To explore the impact of the search time for parking in all customer geographies, we use the test instances for the case study of Illinois that represent urban to rural settings as described in \cite{AutonomousGeneral}. In particular, we use Cook County, Adams County, and Cumberland County to represent urban, suburban, and rural environments, respectively, based on the classification of the United States Department of Agriculture \citep{UrbanRural}. We evaluate ten instances of $n=50$ customers for each county. The test instances include real-world data for the driving times and walking times between customers \citep{TestInstances}. To test the computational performance on larger instances, we generate five instances in each of Cook, Adams, and Cumberland counties for $n=100$ following the procedure of \cite{AutonomousGeneral}. All test instances are posted at: https://doi.org/10.25820/data.006124. 

We restrict the parking locations of the vehicle to all customer locations (i.e. $\Pi = C \cup \{0\}$.) We vary the search time for parking in each type of customer geography (i.e. urban, suburban, and rural) as parking poses a greater challenge in urban environments than rural environments. Within each geography, we consider a search time for parking $p$ independent of the parking location (i.e. for all $k \in \Pi \setminus \{0\}$, $p_k = p$ for some $p\geq0$.) Doing so allows us to isolate the impact of the search time for parking on optimal routing decisions in different geographies. Like \cite{AutonomousGeneral}, we use $p=9$ minutes in Cook County, $p=5$ minutes in Adams County, and $p=1$ minute in Cumberland County as our base case. These values reflect location-dependent parking times where a higher search time for parking is realized in urban environments compared to rural environments. For each county, we also experiment with smaller values of $p$ to understand how including the search time for parking changes the structure of the solution in the CDPP. For Cook County, we experiment with $p=0, 3$ and $6$ minutes. For Adams County, we include experiments with $p=0$ and $3$ minutes. For Cumberland County, we also consider $p=0$. 

We choose the parameter values for the capacity of the delivery person $q$ and the time to load a package $f$ based on observations in the literature. A study of last-mile delivery in London finds that a delivery person delivers 3 packages on average per stop \citep{Observations_London}. Therefore, we consider a carrying capacity based on the number of packages $q$ and use $q=3$ packages in our base case. We experiment with capacities of 1 to 4 packages. The average delivery time for a package is estimated to be between 2.5--4.1 minutes \citep{Observations_London, DeliveryTime_Rome, DeliveryTime_DHL}. This time often includes multiple delivery activities. \cite{Observations_London} observe 4.1 minutes of service time per customer where this estimate includes the time spent unloading the package, walking to the customer, and gaining proof-of-delivery. \cite{Partitions} estimate a one minute walking time allowance and one minute consignee service time. Combining the estimates of \cite{Observations_London} and \cite{Partitions}, we assume the time to unload a package is 2.1 minutes in our base case. 

\section{Computational Performance} \label{Computational Performance}

In this section, we show the impact of the model improvements in Section \ref{ModelImprovements} on computational performance. For this analysis, we use the first five instances of $n=50$ customers for each county.  We consider the base case ($q=3$ packages and location-dependent parking times). Since we assume a loading time linearly dependent on the number of packages, the total loading time is a constant and it is sufficient to solve the MIP with $f=0$. 

First, we focus on the valid inequalities introduced in Claim \ref{f0_Single} and Corollary \ref{parkingspots_f0}. Table \ref{ValidInequalities} provides the average linear program (LP) relaxation bound at the root node and the average runtime (in minutes). The second column of Table \ref{ValidInequalities}, labeled \textit{LP}, gives the average LP bound for the MIP in Section \ref{MIP}. The third column in Table \ref{ValidInequalities}, labeled \textit{Claim \ref{f0_Single}}, gives the average LP bound of the MIP including the valid inequalities in Claim \ref{f0_Single}. The results show that Claim \ref{f0_Single} raises the LP bound on average 5.3\%, 7.8\%, and 3.7\% for Cook, Adams, and Cumberland counties, respectively. Corollary \ref{parkingspots_f0} relies on the results of Claim \ref{f0_Single}. On average, the fourth and fifth columns of Table \ref{ValidInequalities}, labeled \textit{Corollary \ref{parkingspots_f0}} and \textit{Both} respectively, show no further increase in the LP bound by including the valid inequality presented in Corollary \ref{parkingspots_f0}.

\begin{table}[h]
\footnotesize
\renewcommand{\arraystretch}{0.75}
\centering
\begin{tabularx}{\linewidth}{p{25mm} rrrrrrrr}
\toprule
&  \multicolumn{4}{c}{\textbf{LP Bound}} & \multicolumn{4}{c}{\textbf{Runtime (min)}} \\
\cmidrule(lr){2-5}\cmidrule(lr){6-9} 
\textbf{County} & \textbf{LP} & \textbf{Claim \ref{f0_Single}} & \textbf{Corollary \ref{parkingspots_f0}} & \textbf{Both} &\textbf{MIP} & \textbf{Claim \ref{f0_Single}} & \textbf{Corollary \ref{parkingspots_f0}} & \textbf{Both} \\
\midrule
Cook & 128.2 & 135.0 & 135.0 & 135.0 & 88.2 & 20.8 & 35.2 & 19.8\\
Adams & 114.8 & 123.7 &123.7 & 123.7& 68.3 & 18.5 & 22.6 & 18.0 \\
Cumberland & 63.4 & 65.7 & 65.7 & 65.7& 39.1 & 11.2 & 16.1& 11.2\\ 
\bottomrule
\end{tabularx}
\caption{LP bound and runtime (minutes) using the valid inequalities in Section \ref{ModelImprovements} in the base case and $f=0$.} \label{ValidInequalities}
\end{table}

The valid inequalities significantly reduce the runtime.  The sixth column of Table \ref{ValidInequalities}, labeled \textit{MIP (min)}, gives the average runtime for the MIP in Section \ref{MIP}. The seventh and eighth columns in Table \ref{ValidInequalities} give the average runtime of the MIP including the valid inequalities in Claim \ref{f0_Single} and Corollary \ref{parkingspots_f0}, respectively. The results show that Claim \ref{f0_Single} improves the runtime on average 76.4\%, 72.9\%, and 71.3\% for Cook, Adams, and Cumberland counties, respectively. Corollary \ref{parkingspots_f0}  improves the runtime on average 60.1\%, 64.4\%, and 58.8\% for Cook, Adams, and Cumberland counties, respectively. The eighth column in Table \ref{ValidInequalities} shows the average runtime of the MIP using both valid inequalities. On average, used together, Claim \ref{f0_Single} and Corollary \ref{parkingspots_f0} improve the runtime on average 77.5\%, 73.6\%, and 71.4\% for Cook, Adams, and Cumberland counties, respectively, relative to the MIP presented in Section \ref{MIP}. In summary, these valid inequalities significantly improve the run time across all counties, and present a greater impact on urban instances that have a higher search time for parking.

Next, we discuss the variable reduction technique identified in Corollary \ref{f0_y0}. Table \ref{VariableReduction} provides details on the number of $y_{ij}$ variables present in the model with and without the variable reduction identified in Corollary \ref{f0_y0} for various capacities of the delivery person $q$. Let $Y = \{y_{ij}|i \in C, \sigma_j \in S\}$ be the set of $y_{ij}$ variables for a given instance of the model presented in Section \ref{MIP}. Recall that we consider all service sets of size at most $q$ packages. The second column of Table \ref{VariableReduction} gives $|Y| = n\sum_{i=1}^q \binom{n}{q}$. For each $i \in C$, define $\bar{J}_i =\{\sigma_j \in J_i\vert \textrm{ } |\sigma_j| \geq 2\}$ to be the service sets identified by Corollary \ref{f0_y0} that will not be serviced while parked at customer $i$. Then, let $\bar{Y}_i = \{y_{ij} | \sigma_j \in \bar{J}_i\}$ represent the variables identified by Corollary \ref{f0_y0} that can be removed from the model with respect to parking at customer $i$. In total, Corollary \ref{f0_y0} removes the variables $\bar{Y} = \cup_{i \in C} \bar{Y}_i$ from the model. Let $\hat{Y} = Y \setminus \bar{Y}$ be the remaining variables in the model. The third and fourth columns of Table \ref{VariableReduction} give the remaining number of variables $|\hat{Y}|$ and the percent reduction in $y_{ij}$ variables. In the base case of $q=3$ packages, Corollary \ref{f0_y0} reduces the number of $y_{ij}$ variables by 5.9\%. This reduction in variables also reduces the number of Constraints \eqref{CDPPVisit}. As capacity $q$ increases, the percent reduction in $y_{ij}$ variables increases. However, the number of $y_{ij}$ variables $|\hat{Y}|$ grows at a much greater rate making it more difficult to solve instances of larger capacity. We further discuss the computational limitations with respect to capacity in Section \ref{Heuristic}.

\begin{table}[h]
\renewcommand{\arraystretch}{0.75}
\centering
\begin{tabular}{lrrr}
\toprule
\textbf{\boldmath $q$} &\multicolumn{1}{c}{\boldmath \textbf{$|Y|$}} & \multicolumn{1}{c}{\boldmath \textbf{$|\hat{Y}|$}} & \textbf{\boldmath Percent Reduction in $y_{ij}$ Variables} \\
\midrule
 1 & 2,500 & 2,500 & 0.0\% \\
 2 & 63,750 & 61,300 & 3.8\% \\
 3 & 1,043,750& 982,500 & 5.9\%\\
 4 & 12,558,750 & 11,576,300 & 7.8\%\\
\bottomrule
\end{tabular}
\caption{Variable reduction results of Corollary \ref{f0_y0} for $n=50$ and various capacities $q$ (packages).} \label{VariableReduction}
\end{table}

Table \ref{VariableReduction_Runtime} provides the average runtime (in minutes) of the MIP with and without the variable reduction technique in Corollary \ref{f0_y0}. The second column of Table \ref{VariableReduction_Runtime} gives the average runtime for the MIP in Section \ref{MIP} using all variables in $Y$.  The third column of Table \ref{VariableReduction_Runtime} gives the average runtime under the reduced $y_{ij}$ variables in $\hat{Y}$. Reducing the number of $y_{ij}$ variables improves the runtime on average  47.8\%, 69.9\%, and 61.7\% for Cook, Adams, and Cumberland counties, respectively. The fourth column of Table \ref{VariableReduction_Runtime} gives the average runtime under the reduced $y_{ij}$ variables in $\hat{Y}$ and the valid inequalities (i.e. Claim \ref{f0_Single} and Corollary \ref{parkingspots_f0}). Including the variable reduction technique further improves the impact of the valid inequalities on run time.  Reducing the number of $y_{ij}$ variables further improves the runtime on average 25.4\%, 26.3\%, and 35.0\% for Cook, Adams, and Cumberland counties, respectively, relative to the MIP with valid inequalities (presented in the ninth column of Table \ref{ValidInequalities}). In summary, Corollary \ref{f0_y0} significantly improves the runtime across all customer geographies. 

\begin{table}[h]
\renewcommand{\arraystretch}{0.75}
\centering
\begin{tabular}{lrrr}
\toprule
&\multicolumn{3}{c}{\textbf{Runtime (min)}} \\
\cmidrule(lr){2-4}
\textbf{County} & \textbf{\boldmath $Y$\& no VIs} & \textbf{\boldmath $\hat{Y}$ \& no VIs} & \textbf{\boldmath $\hat{Y}$ \& VIs} \\
\midrule
Cook & 88.2 & 46.0 & 14.8\\
Adams & 68.3 & 20.5 & 13.3 \\
Cumberland & 39.1 & 15.0 & 7.3 \\ 
\bottomrule 
\end{tabular}
\caption{Computational results from the variable reduction of Corollary \ref{f0_y0} in the base case and $f=0$.} \label{VariableReduction_Runtime}
\end{table}

\section{Experimental Results} \label{Experimental Results}
In this section, we explore the impact of including the search time for parking in optimal routing decisions on the structure of the solution and the completion time of the delivery tour. Section \ref{Benchmarks} introduces benchmarks to the CDPP that reflect current industry practice as well as recent models in the literature. Section \ref{Experimental_Results_Summary} summarizes the differences between the CDPP and benchmark solutions in the base case (location-dependent parking times, $q=3$ packages, and $f=2.1$ minutes). Then, Sections \ref{ParkingTimeP} and \ref{CapacityQ} discuss the impact of the search time for parking and capacity of the delivery person, respectively, on the structure of the CDPP solution.

\subsection{Benchmarks} \label{Benchmarks}

We introduce three benchmarks to the CDPP and use these benchmarks to highlight why including the search time for parking matters in routing optimization for last-mile delivery. 

\subsubsection{No parking time}

For this benchmark, we solve the CDPP with $p=0$ to show how including the search time for parking changes the structure of the solution. When we restrict the customer geography to a complete grid of customers, Claims \ref{Grid_q2} and \ref{Grid_q3} show an optimal solution to the CDPP when $p=0$ is a TSP solution parking at every customer. On a general customer geography, \cite{Dissertation} shows that this conclusion holds if driving between customers is always faster than walking. Let $v$ be the optimal value of the CDPP when $p=0$ and $s$ be the number of times the vehicle parks in the respective optimal solution. Then, the completion time with a solution of this benchmark is $v + sp$ minutes. 

\subsubsection{Modified TSP}

The routing algorithm for UPS provides the driver a solution based on a TSP algorithm \citep{UPS_Orion}. The driver has autonomy to make the final routing decisions including where the delivery person will walk and where he/she will drive. To model this real-life practice, we use a TSP solution to fix the order of service. To reflect choices made by the driver and allow service to customers on foot, we transform this TSP solution to take into account the trade-offs between the search time for parking, driving time, and walking time.

This benchmark, hereafter called the Modified TSP, is a route-first cluster-second method to optimize the trade-offs of walking, driving, and searching for parking given a fixed customer order. A TSP solution with respect to driving times fixes the order of customer service. We generate potential service sets based on this order and restrict the size of the service set based on the capacity of the delivery person $q$. In addition, we allow the delivery person to serve multiple customer service sets from the same parking spot while maintaining the order of customer service. We implement this benchmark  by restricting the sets of driving variables $x_{ik}$ and service variables $y_{ij}$ in the CDPP. We update the walking service time $w_{ij}$ to maintain the order of service. We use the objective function in Equation \eqref{CDPPobj} to minimize the completion time of the delivery tour and capture the impact of the search time for parking on the driver's decision making.

\subsubsection{Relaxed M-S}

One of the limitations in the models presented by \cite{Partitions} and \cite{OptimizeServiceSets} is that both require a parking location in every customer set. Unlike \cite{Partitions}, \cite{OptimizeServiceSets} include the clustering of customers as a decision in the optimization problem. To benchmark the CDPP with the literature, we consider a relaxed version of the model by \cite{OptimizeServiceSets}, hereafter called Relaxed M-S, that allows the delivery person to serve multiple customer service sets from the same parking spot. For comparison purposes, we define the carrying capacity of the delivery person in the Relaxed M-S benchmark to be based on the number of packages $q$. 

The key difference between the Relaxed M-S and the CDPP is the objective function. The objective function for the CDPP, in Equation \eqref{CDPPobj}, includes the search time for parking when evaluating the completion time of the delivery tour. \cite{OptimizeServiceSets} consider a weighted sum of the driving and walking times for the delivery person. Using the notation of Section \ref{ServiceTimes}, Equation \eqref{MS_generalized_obj} presents the objective function in \cite{OptimizeServiceSets},
\begin{align} \label{MS_generalized_obj}
\alpha\sum_{i \in \Pi} \sum_{k \in \Pi \setminus \{i\}} x_{ik}D(i,k) + (1-\alpha)\sum_{i \in \Pi}\sum_{\sigma_j \in S} y_{ij}w_{ij}
\end{align}
where $\alpha \in [0,1]$. 

We implement the Relaxed M-S benchmark by using Equation \eqref{MS_generalized_obj} as the objective value in the CDPP. Since we assume a loading time linearly dependent on the number of packages, the total loading time is a constant and added to the objective value in Equation \eqref{MS_generalized_obj}. Similar to \cite{OptimizeServiceSets}, we consider $\alpha \in \{0.6, 0.8\}$ in the Relaxed M-S benchmark. Let $v$ be the optimal value of the Relaxed M-S benchmark (including loading time) and $s$ be the number of times the vehicle parks in the respective solution. Then, the completion time with a solution of this benchmark is $v + sp$ minutes. Comparing the completion times for the CDPP and the Relaxed M-S benchmark highlights the impact of including the search time for parking in the objective function.

\subsection{Comparison of CDPP to Benchmarks} \label{Experimental_Results_Summary}

In this section, we compare the completion time of the delivery tour in the CDPP with the values of the solutions for the three benchmarks in the base case (location-dependent parking times, $q=3$ packages, and $f=2.1$ minutes).

Figure \ref{BenchmarkComparison_BaseCase} shows the average percent reduction in delivery tours by using the CDPP relative to each benchmark for Cook, Adams, and Cumberland counties. Including the search time for parking reduces the completion time of delivery  tours in all counties. The CDPP reduces the completion time up to 53\% on average relative to the no-parking-time benchmark. The CDPP also outperforms industry practice and models in the literature. Using the Modified TSP to model real-life practice, the CDPP reduces the completion time up to 11\% on average. The Relaxed M-S benchmark generalizes the current models in the literature. Figure \ref{BenchmarkComparison_BaseCase} shows that the Relaxed M-S benchmark with $\alpha = 0.6$ performs similarly to the no-parking-time benchmark. Increasing $\alpha$ to 0.8, the CDPP reduces the completion time up to 48\% on average. In Cumberland County, the CDPP realizes greater reductions at higher levels of $\alpha$. Insight \ref{Parking Matters} summarizes this result.

\begin{insight}\label{Parking Matters}
Parking matters in last-mile delivery optimization. The CDPP outperforms industry practice and models in the literature highlighting the value of determining the order of service and including the search time for parking in optimal routing decisions.
\end{insight}

\begin{figure}[h]
  \centering
  \includegraphics[scale = 0.3]{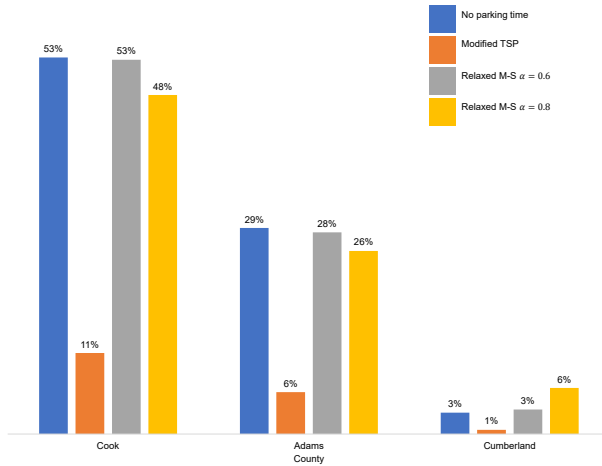}
  \caption{Average percent reduction in completion time of delivery tours by using CDPP relative to the no-parking-time benchmark, Modified TSP, Relaxed M-S with $\alpha = 0.6$, and Relaxed M-S with $\alpha = 0.8$ in the base case. }
\label{BenchmarkComparison_BaseCase}
\end{figure}

The impact of parking differs across customer geographies. The CDPP provides the greatest savings in Cook County, an urban environment with the highest customer density and search time for parking. Including the search time for parking in the CDPP reduces the completion time of these delivery tours by an average of 53\% relative to when no parking time is considered. Further, the delivery person saves an average of 11\% and 48\% in delivery time relative to the Modified TSP and Relaxed M-S with $\alpha = 0.8$, respectively. In rural areas where the customer density and  search time for parking is generally low, including the search time for parking reduces the completion time of the delivery tour for the CDPP on average 3\% in Cumberland County relative to when no parking time is considered. The CDPP provides a similar completion time in the delivery tour as the Modified TSP. Therefore, using the TSP solution may be sufficient in making routing decisions for rural environments. Increasing the value of $\alpha$ in Equation \eqref{MS_generalized_obj} makes walking advantageous in the solution to the Relaxed M-S benchmark. However, in rural environments, when the distance between customers is greater, a solution that incentivizes walking adversely affects the realized completion time of the delivery tour. Therefore, we see a higher level of savings for the CDPP when $\alpha = 0.8$ than $\alpha = 0.6$. We further explore this result later in this section. Insight \ref{BetterUrban} summarizes these results. 

\begin{insight} \label{BetterUrban}
Including the search time for parking in routing optimization for last-mile delivery provides the greatest advantage in urban environments where parking is a challenge. In rural areas, the TSP may be sufficient in making routing decisions. 
\end{insight}

Now, we explore how the structure of an optimal solution to the CDPP differs from the solutions of the benchmark problems. Figure \ref{RoutingStructures} shows the solutions to the (a) CDPP, (b) no-parking-time benchmark, and (c) Modified TSP for a portion of an instance in Cook County. We focus on a portion of the solution to highlight the local differences in the CDPP and benchmark solutions. The solid black lines indicate the driving path of the vehicle. The dotted lines indicate the walking paths of the delivery person. Each color represents a different service set. The flag icons indicate the parking spots. The numerical label at the customer location indicates the order service within the tour. For example, a numerical label of $i$ indicates that this customer is $i$-th on the tour. Figure \ref{RoutingStructures}a shows an optimal solution for the CDPP in this instance of Cook County. The delivery person serves three customer sets from a single parking spot. Figure \ref{RoutingStructures}b shows the solution when no parking time is considered (i.e. $p=0$). For this instance, the solution to the no-parking-time benchmark is 
a TSP solution, parking at every customer, and driving between customer locations. However, the  search time for parking in Cook County is 9 minutes. If the delivery person follows the solution in Figure \ref{RoutingStructures}b, the delivery person spends about an hour looking for parking to serve these eight customers. Figure \ref{RoutingStructures}a shows that even though walking is slower than driving, it is more advantageous to park once and serve all customers on foot. Insight \ref{CompareIgnoreParkingTime} summarizes this observation. 

\begin{insight} \label{CompareIgnoreParkingTime}
When parking time is ignored, routing decisions that focus on the fastest way to service all customers may result in unnecessary time spent searching for parking. 
\end{insight}

\begin{figure}[h]
\centering
\begin{subfigure}{0.49\textwidth}
  \centering
  \includegraphics[scale =0.3] {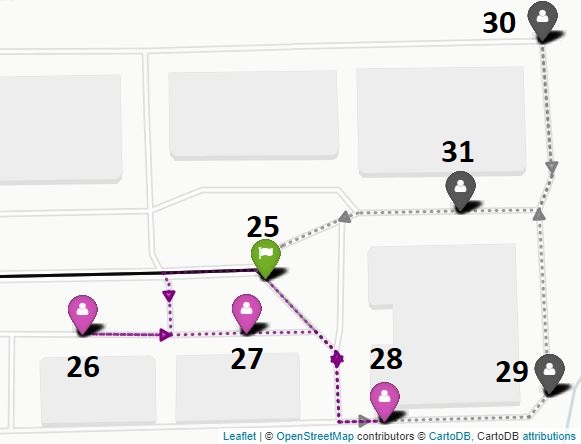}
  \caption{CDPP}
\end{subfigure}

\begin{subfigure}{.49\textwidth}
  \centering
  \includegraphics[scale =0.3] {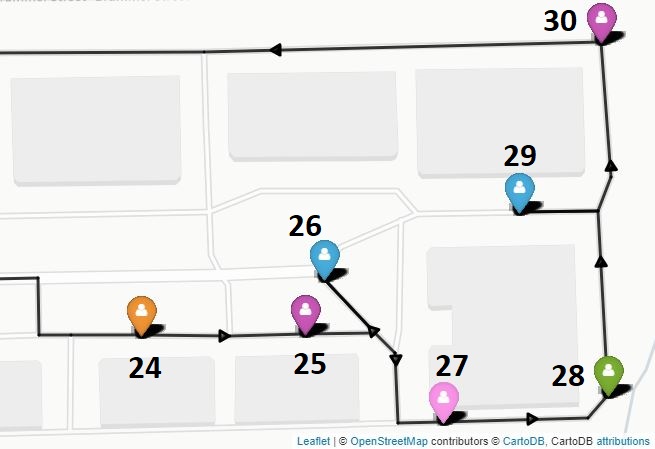}
  \caption{No Parking Time}
\end{subfigure}
\begin{subfigure}{.49\textwidth}
\centering
  \includegraphics[scale = 0.3]{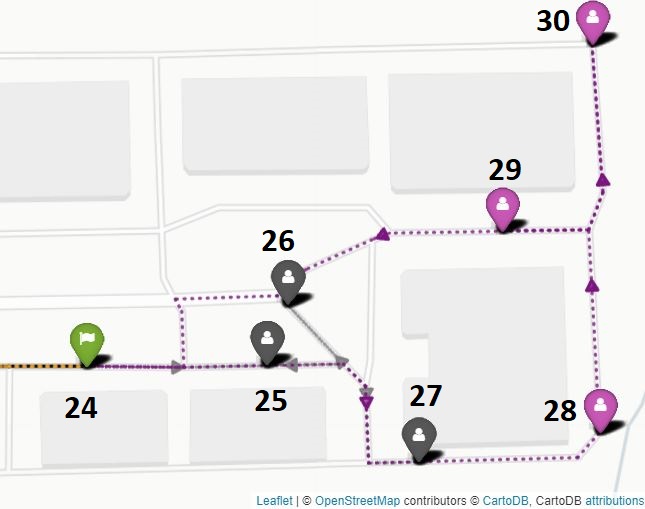}
  \caption{Modified TSP}
\end{subfigure}
\caption{\boldmath Solutions to a portion of an instance of Cook County for the (a) CDPP, (b) no-parking-time benchmark, and (c) Transformed TSP benchmark in the base case.}
\label{RoutingStructures}
\end{figure}

Recall that the Modified TSP uses a solution to the TSP to fix the order of customer service. Figure \ref{RoutingStructures}c shows the solution of the Modified TSP benchmark. The Modified TSP optimizes the trade-offs between walking, driving, and searching for parking. Similar to the CDPP solution in Figure \ref{RoutingStructures}a, Figure \ref{RoutingStructures}c shows that the delivery person parks once to avoid high search times for parking and walks to service customers on foot. However, fixing the order of service to the TSP results in additional walking time for the delivery person relative to the CDPP in Figure \ref{RoutingStructures}a. The CDPP takes into account the search time for parking when determining where to park as well as the walking path for the delivery person reducing the completion time of the delivery tour relative to the Modified TSP. Insight \ref{TransformedTSPComparison} summarizes this observation.

\begin{insight} \label{TransformedTSPComparison}
The CDPP outperforms the Modified TSP by optimizing the service order which leads to better trade-offs between driving, walking, and searching for parking.
\end{insight}

Finally, we analyze the solution structure of the Relaxed M-S benchmarks to understand the impact of using the search time for parking in the objective function of the CDPP. Figure \ref{MS_CDPP_Breakdown} shows the average time (in minutes) spent in the Relaxed M-S and CDPP delivery tours searching for parking, driving, walking, and loading packages for Cook and Cumberland counties. When $\alpha = 0.6$, the solution structure in both counties relies on the delivery person driving with limited to no walking. A solution that focuses on driving forces the delivery person to search for many parking locations. In Cook County, Figure \ref{MS_CDPP_Breakdown}a shows the delivery person spends on average 77\% of the delivery tour searching for parking. Increasing $\alpha$ to 0.8 makes walking advantageous and reduces the completion time of the delivery tour by 11\% on average. However, searching for parking remains a significant portion of the delivery tour. Including the search time for parking in the objective function further reduces the completion time of the delivery tour by 48\% on average. In Cook County, where the customer density is high, the delivery person controls the total search time for parking by parking in fewer locations and walking to service customers. When customers are further apart, like Cumberland County, Figure \ref{MS_CDPP_Breakdown}b shows that increasing $\alpha$ to 0.8 increases the completion time of the delivery tour 3\% on average. Making walking advantageous in rural environments forces the delivery person to walk between customers that are further apart. The similarity in the solution structures between the Relaxed M-S with $\alpha = 0.6$ and the CDPP support the conclusions of Insight \ref{BetterUrban} that a solution that relies on driving is sufficient in rural environments. Insight \ref{MSComparison} summarizes these results. 

\begin{insight} \label{MSComparison}
Including the search time for parking in the objective for last-mile delivery routing is critical to achieve optimal trade-offs between driving, walking, and searching for parking.
\end{insight}

\begin{figure}[h]
\centering
\begin{subfigure}{.5\textwidth}
  \centering
  \includegraphics[scale =0.25] {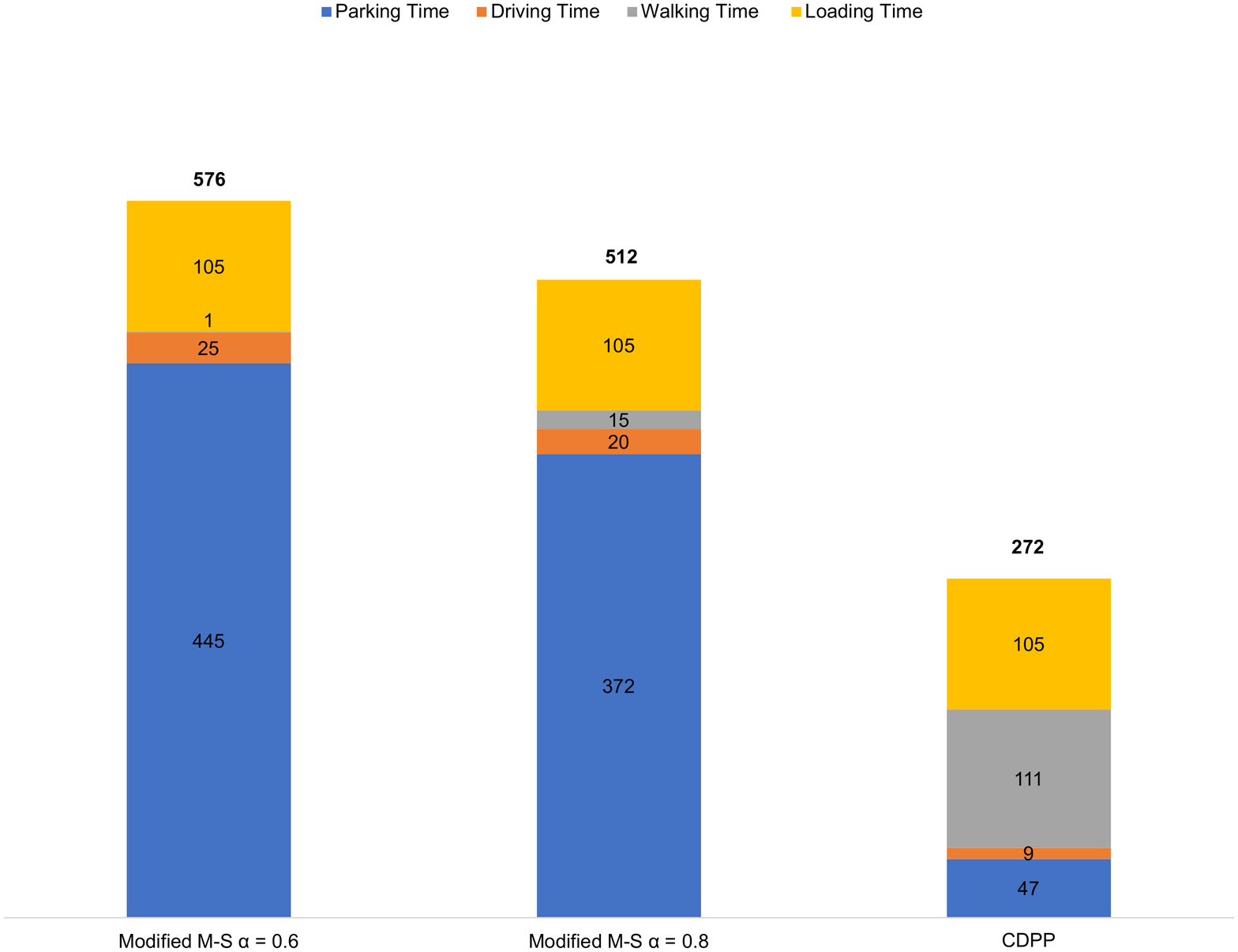}
  \caption{Cook County}
\end{subfigure}%
\begin{subfigure}{.5\textwidth}
  \centering
  \includegraphics[scale = 0.25]{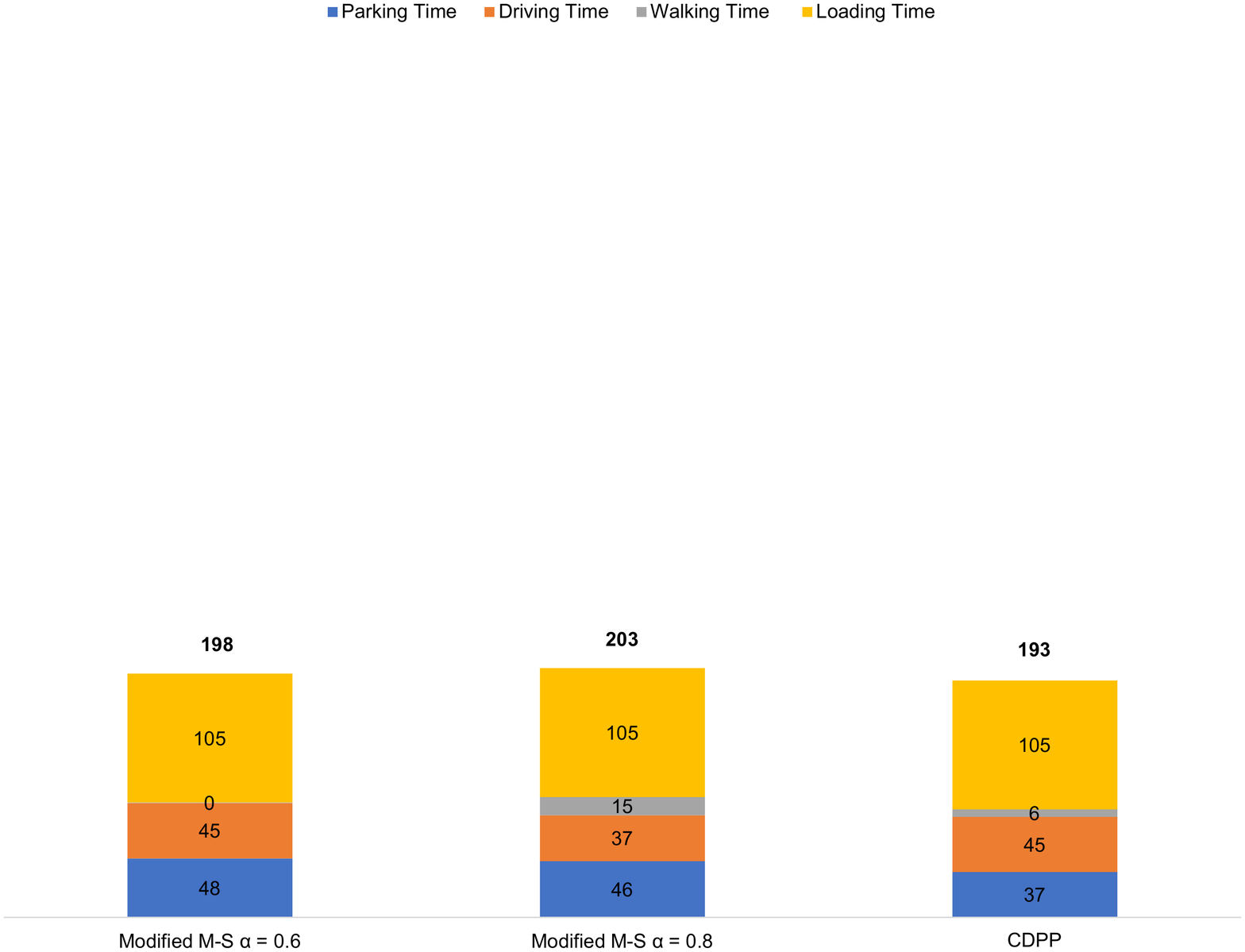}
  \caption{Cumberland County}
\end{subfigure}
\caption{Average time (minutes) spent in the Modified M-S and CDPP delivery tours searching for parking, driving, walking, and loading packages for (a) Cook County and (b) Cumberland County in the base case.}
\label{MS_CDPP_Breakdown}
\end{figure}

\subsection{Impact of search time for parking} \label{ParkingTimeP}

In this section, we focus on how the search time for parking $p$ changes the structure to the CDPP solution. Figure \ref{Cook_q3_Breakdown} shows the average time (in minutes) spent in optimal CDPP delivery tours searching for parking, driving, walking, and loading packages for Cook County in the base case with various parking times. In urban settings, the search time for parking is expected to be high. We test $p=0, 3, 6,$ and $9$ minutes. Figure \ref{Cook_q3_Breakdown} shows that the total time spent searching for parking remains relatively stable --- between 40-47 minutes on average --- when $p>0$, indicating that the delivery person parks fewer times as the search time for parking increases. Relatedly, we observe the walking time significantly increases (57 to 111 minutes on average) as the delivery person must walk further to service more customers. Insight \ref{ParkingTimeInsight} summarizes this observation.

\begin{insight} 
\label{ParkingTimeInsight}
In urban areas, as search time to find parking increases, the total time spent looking for parking remains stable. At higher search times for parking, the delivery person parks fewer times at the expense of significantly increasing walking time.
\end{insight}

\begin{figure}[h]
  \centering
  \includegraphics[scale = 0.3]{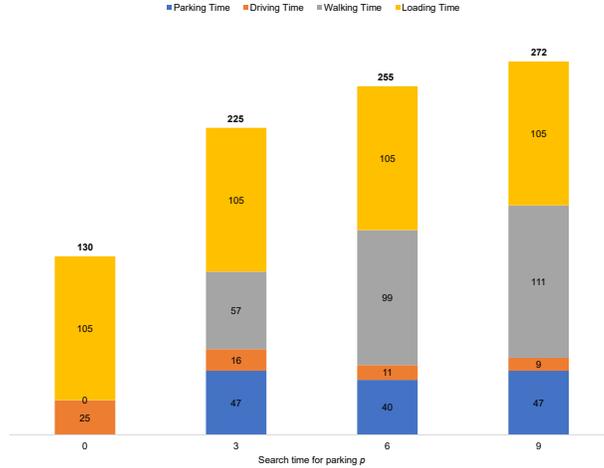}
  \caption{Average time (minutes) spent in optimal CDPP delivery tours searching for parking, driving, walking, and loading packages for Cook County in the base case with various parking times. }
\label{Cook_q3_Breakdown}
\end{figure}

Outside of urban environments, we observe that differences in customer geography reduce the impact of the search time for parking on the structure of the CDPP solution. Figure \ref{Adams_Cumberland_q3_Breakdown} shows the average time (in minutes) spent in optimal CDPP delivery tours searching for parking, driving, walking, and loading packages for Adams and Cumberland counties. Recall that Adams and Cumberland counties are classified as suburban and rural areas, respectively. Therefore, we expect customers to be further apart in these counties than in an urban area, like Cook County. When $p$ increases from 0 to  3 minutes, the completion time of the delivery tour increases by approximately the same amount on average in Cook and Adams counties (95 minutes in Cook County and 98 minutes in Adams County). However, the impact on the structure of the solution differs. When $p$ increases from 0 to 3 minutes in Cook County, Figure \ref{Cook_q3_Breakdown} shows the average search time for parking increases 47 minutes accounting for 49\% of the increase in the completion time of the delivery tour. The remaining 51\% of the increase reflects changes in the solution structure, i.e. less driving time and more walking time for the delivery person. In Adams County, when $p$ increases from 0 to 3 minutes, Figure \ref{Adams_Cumberland_q3_Breakdown}a shows that the average search time for parking increases 65 minutes. In this case, the total search time for parking accounts for 66\% of the increase in the completion time of the delivery tour. Similar to Cook County, when $p$ increases from 0 to 3 minutes, Figure \ref{Adams_Cumberland_q3_Breakdown}a shows driving time decreases and walking time increases, but these changes in the solution structure only account for 33\% of the increase in the completion time of the delivery tour. Therefore, the increase in the search time for parking has less impact on the solution structure in Adams County than Cook County. The differences in how the solution changes between Cook and Adams counties reflect that the CDPP solution cannot trade off more walking at a higher search time for parking in a county, such as Adams, where the customers are relatively further apart. Insight \ref{CapacityGeography} summarizes these results.

\begin{insight} \label{CapacityGeography}
Differences in customer geography influence the significance of including the search time for parking in routing decisions. Increasing the search time for parking outside of urban environments has less impact on the solution structure than in urban environments. 
\end{insight}

\begin{figure}[h]
\centering
\begin{subfigure}{.5\textwidth}
  \centering
  \includegraphics[scale =0.25] {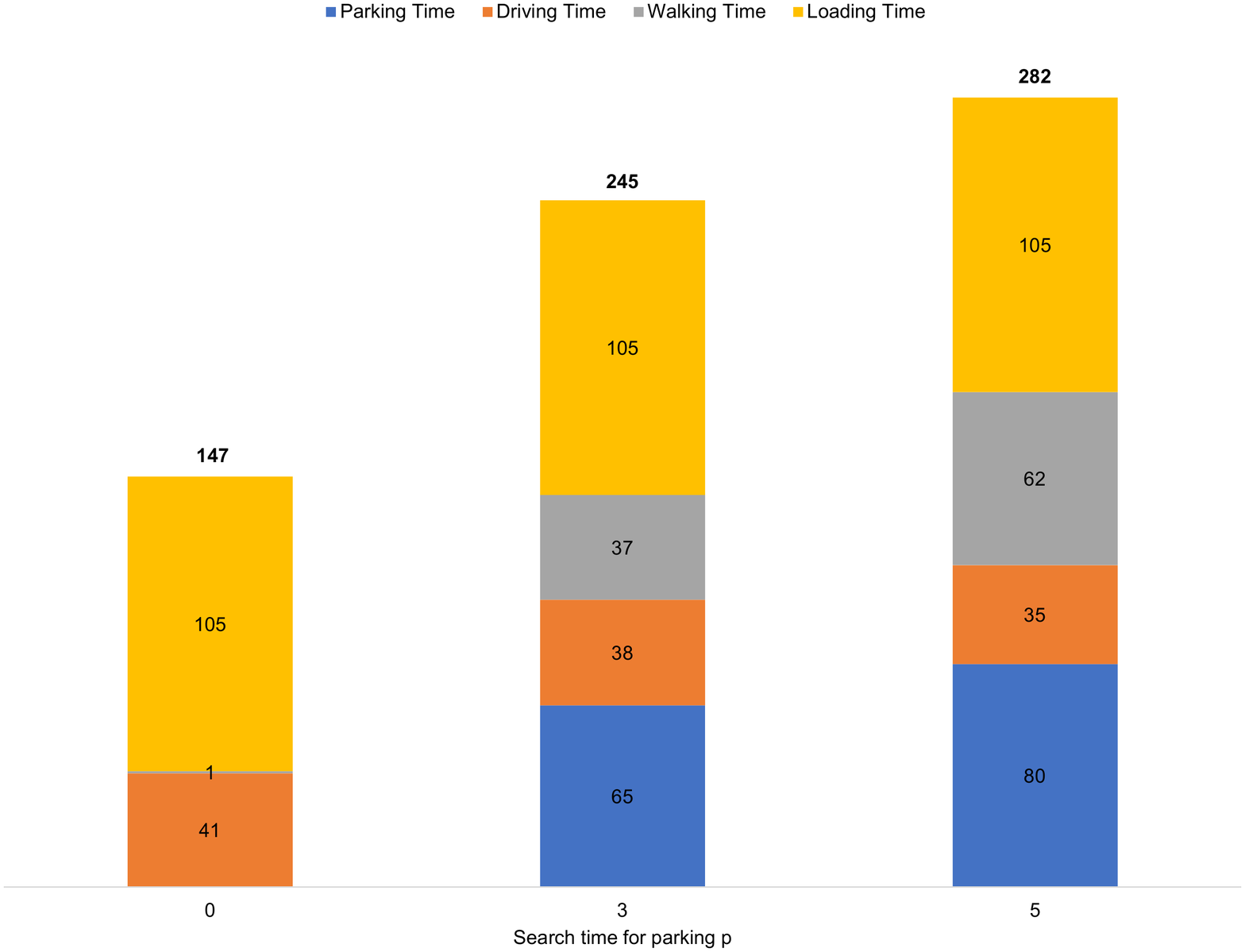}
  \caption{Adams County}
\end{subfigure}%
\begin{subfigure}{.5\textwidth}
  \centering
  \includegraphics[scale = 0.25]{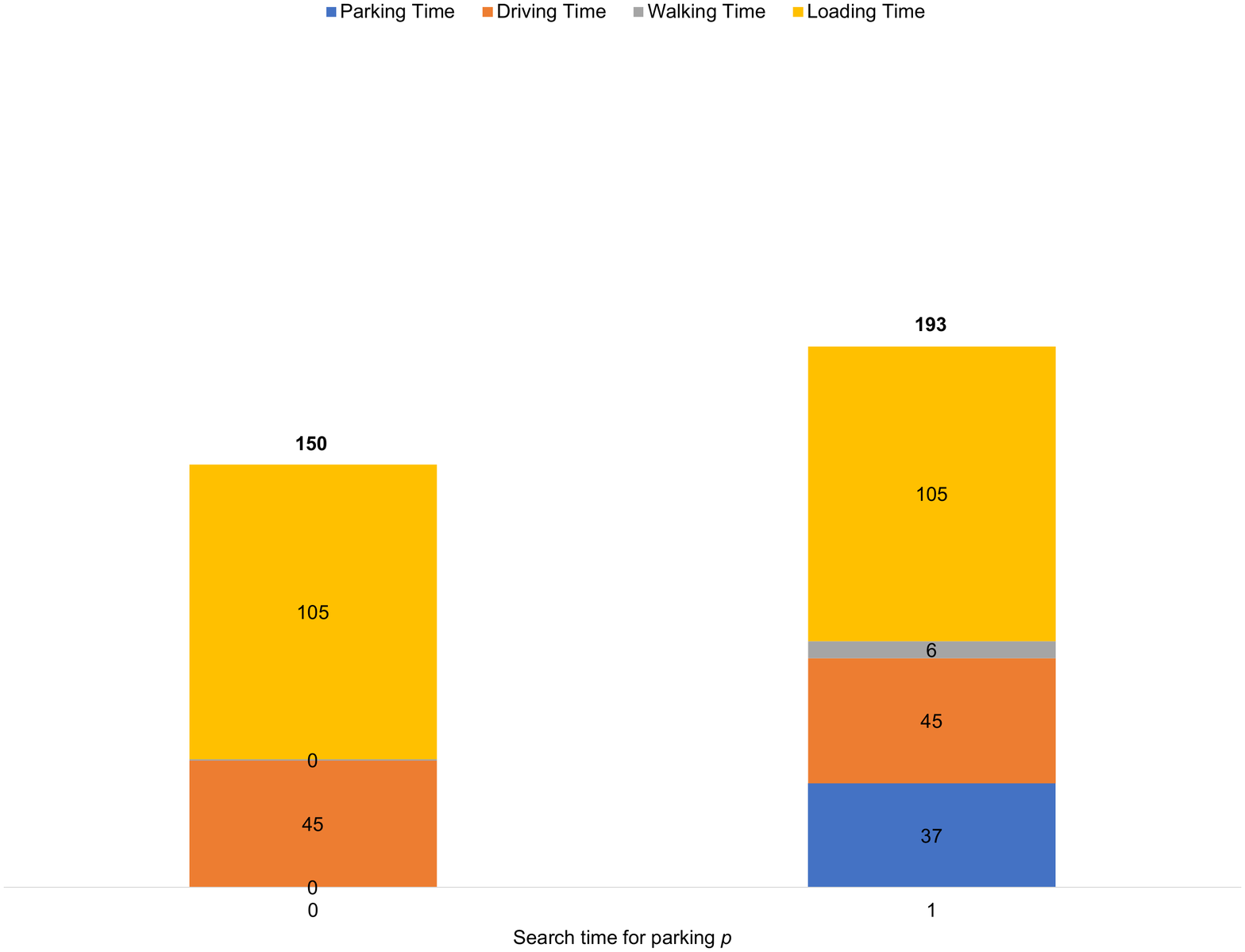}
  \caption{Cumberland County}
\end{subfigure}
\caption{Average time (minutes) spent in optimal CDPP delivery tours searching for parking, driving, walking, and loading packages for (a) Adams County and (b) Cumberland County in the base case with various parking times.}
\label{Adams_Cumberland_q3_Breakdown}
\end{figure}

In rural environments, like Cumberland County, we test $p=0$ and 1 minute as we expect search time for parking to be low. Since customers are likely further apart than urban and suburban environments, we expect the delivery person to spend more time driving between customers than walking. Figure \ref{Adams_Cumberland_q3_Breakdown}b shows that there exists customer locations close enough together such that it is advantageous to walk to them even when $p=1$ minute. However, the driving time remains the same on average indicating that the increase in the search time for parking does not have a significant effect on routing decisions. These observations support the conclusion of Insight \ref{BetterUrban} that the TSP driving to all customers may be sufficient in making routing decisions for rural environments.

\subsection{Impact of the Capacity of Delivery Person} \label{CapacityQ}

In this section, we discuss the impact of the capacity of the delivery person $q$ on reducing the completion time of the delivery tour in different customer geographies. Figure \ref{Capacity_Comparison} shows the average objective value of the CDPP for Cook, Adams, and Cumberland counties in the base case with varying capacities. In urban areas, like Cook County, increasing the capacity from $q=1$ to 4 packages reduces the completion time of the delivery tour by 20\% on average. We observe a smaller impact in Adams County with an average reduction of 6\% when increasing from $q=1$ to 4 packages. In rural areas, like Cumberland County, we observe little difference between the solution for $q=1$ and $4$ packages. This observation supports Insight \ref{BetterUrban} showing that serving customers individually may be sufficient for routing decisions in rural areas. For all counties, we observe marginally decreasing reductions in the delivery tour when increasing capacity. Insight \ref{CapacityInsight} summarizes these results. 

\begin{insight} \label{CapacityInsight}
Increasing the capacity of the delivery person is more advantageous outside of rural areas. In all customer geographies, marginal reductions in the completion time of the delivery tour decrease at higher capacities.
\end{insight}

\begin{figure}[h]
  \centering
  \includegraphics[scale = 0.3]{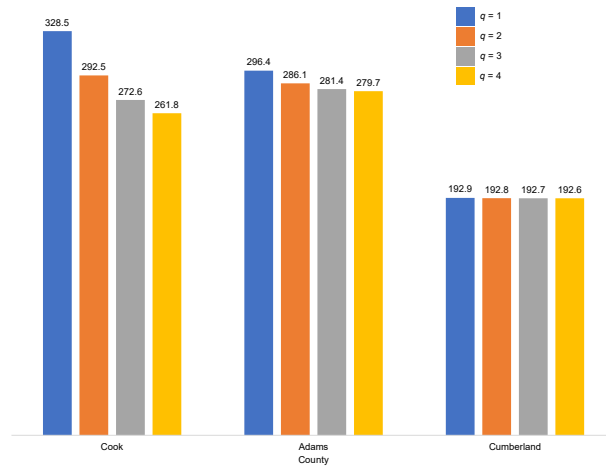}
  \caption{Average optimal value of the CDPP for Cook, Adams, and Cumberland Counties in the base case with varying capacities.}
\label{Capacity_Comparison}
\end{figure}

\section{Heuristic Solution to CDPP} \label{Heuristic}

As we have shown in Section \ref{Computational Performance}, our model improvements allow us to efficiently solve problems with $n=50$ customers and $q=3$ packages. Most problems in the related literature focus on at most $n=50$ customers and support the use of $q=3$ packages as a base case. If we want to solve instances with more customers or larger capacities for the delivery person, we face computational limitations due to the growth in the size of the model. For example, when $n=50$ customers, the average runtime for $q=4$ packages is 5.9, 4.4, and 3.1 hours for Cook, Adams, and Cumberland counties, respectively. Increasing to $q=5$ packages, the number of $y_{ij}$ variables increases to 118,496,750 making the problem intractably large. Increasing from $n=50$ to $100$ also significantly increases the runtime. For example, when $n=100$ customers, the average runtime for $q=2$ packages is 0.4, 3.1, and 8.5 hours for Cook, Adams, and Cumberland counties, respectively. For $n=100$, with the base case of $q=3$ packages, the problem again becomes computationally infeasible.

Section \ref{Experimental Results} shows that by including the search time for parking in routing optimization for last-mile delivery, the solution to the CDPP may signifcantly reduce the completion time of the delivery tour. To realize these benefits for larger instances, we provide a heuristic solution to the CDPP that finds high quality solutions quickly. The proposed two-echelon location-routing heuristic decomposes the decisions of the CDPP into two echelons. Section \ref{FirstEchelon} describes the first echelon, defined as the parking assignment and routing problem (PA-R), where the customers are assigned to parking locations and the route of the vehicle between these parking locations is defined. For each parking spot, Section \ref{SecondEchelon} describes the second echelon, defined as the service set assignment problem (SSA), which determines how to optimally partition the customers in service sets given the parking spots, therefore, defining the walking paths of the delivery person. Then, Section \ref{Heuristic_Results} discusses the quality of the heuristic solutions.

\subsection{Parking Assignment and Routing Problem} \label{FirstEchelon}

The PA-R determines where to park the vehicle, the assignment of customers to parking locations, and the route of the vehicle between these parking locations. We further decompose the PA-R into two IPs. 

First, we determine where to park the vehicle and assign customers to parking locations. Let $\hat{p}_{i} = 1$ if the delivery person parks at customer $i$ for $i \in C$, $0$ otherwise. Let $\hat{a}_{ik} = 1$ if customer $k$ is assigned to parking spot $i$ for $i,k \in C$, $0$ otherwise. All service times used in the IPs of this section are defined in Section \ref{ServiceTimes}. The cost of opening a parking spot is the search time for parking $p$. Therefore, the PA-R captures parking at fewer locations when the search time for parking is high. Solve the following MIP and denote an optimal solution $Z$. 

\begin{align} \label{Heuristic_obj_simple}
\min & \sum_{i \in C} p \cdot \hat{p}_{i} + \sum_{i \in C} \sum_{k \in C} W(i,k) \cdot \hat{a}_{ik} \\ \label{HeuristicAssignAll_simple}
\textrm{s.t.} &  \sum_{k \in C} \hat{a}_{ki} = 1 && \forall i \in C \\ \label{HeuristicVisit_simple}
& \hat{a}_{ik} \leq \hat{p}_{i} && \forall i,k \in C \\ \label{Heuristic_open_simple}
& \hat{p}_i \leq \sum_{k \in C} \hat{a}_{ik} && \forall i \in C \\
\label{Heuristic_p_simple}
& \hat{p}_{i} \in \{0,1\} && \forall i \in C \\ \label{Heuristic_a_simple} 
& \hat{a}_{ik} \in \{0,1\} && \forall i,k \in C
\end{align}

\noindent The objective function in Equation \eqref{Heuristic_obj_simple} minimizes a linear combination of the search time for parking and the assignment of walking (without return walks) to the customers from parking locations. Constraints \eqref{HeuristicAssignAll_simple} require that each customer is assigned to a parking spot. Given that a customer is assigned to a parking spot, Constraints \eqref{HeuristicVisit_simple} require the vehicle to be parked at that parking spot. If no customers are assigned to a parking spot, then Constraints \eqref{Heuristic_open_simple} ensure that the parking spot is not opened. Finally, Constraints \eqref{Heuristic_p_simple} and \eqref{Heuristic_a_simple} give the binary constraints on variables $\hat{p}_{i}$ and $\hat{a}_{ik}$, respectively.

Now, let $P = \{i\in C| \hat{p}_{i} = 1 \textrm{ in } Z \}$ be the parking spots in solution $Z$. We find the TSP solution of $P \cup 0$ with respect to the driving times, $D(i,k)$, using the standard TSP IP formulation with single commodity subtour elimination constraints \citep{Subtour}. This solution provides the route of the vehicle between the parking locations. 

\subsection{Service Set Assignment Problem} \label{SecondEchelon}

For each parking spot, the SSA partitions the customers into customer service sets defining the walking paths of the delivery person. For each $i \in P$, define $K_i = \{k \in C|\hat{a}_{ik} = 1 \textrm{ in } Z\}$ to be the customers assigned to parking spot $i$ in solution $Z$. For each $i \in P$, define $S_i \subset S$ to be the potential sets to be served at $i$, i.e. $S_i = \{\sigma_j \in S| \sigma_j \subseteq K_i\}$. For each $k \in K_i$, let $\hat{J}_k = \{\sigma_j \in J_k | \sigma_j \in S_i\}$ be the service sets in $S_{i}$ that include customer $k$. Let $\hat{y}_j =1$ if $\sigma_j$ is serviced for $\sigma_j \in S_i$. Solve the following MIP for each parking spot $i\in P$ to determine the service sets. 

\begin{align} \label{ChooseSets_obj}
\min & \sum_{\sigma_j \in S_i} w_{ij}\hat{y}_j \\ \label{ChooseSets_VisitAll}
\textrm{s.t.} & \sum_{j \in \hat{J}_k} \hat{y}_j =1 && \forall k \in K_i \\ \label{ChooseSets_y}
& \hat{y}_j \in \{0,1\} && \forall j \in S_i
\end{align}

\noindent The objective function in Equation \eqref{ChooseSets_obj} minimizes the walking time for the delivery person servicing customers $K_i$ while parked at customer $i$. Constraints \eqref{ChooseSets_VisitAll} require each customer in $K_i$ to be in a service set. Constraints \eqref{ChooseSets_y} give the binary constraints on the variables $\hat{y}_j$.

\subsection{Quality of Heuristic Solutions} \label{Heuristic_Results}

In this section, we evaluate the quality of the two-echelon location-routing heuristic. For this analysis, we consider the base case (location-dependent parking times and $f=2.1$ minutes) for $n=50$ and $100$ customers with various capacities of the delivery person $q$. For $n=50$, averages are taken across ten instances. For $n=100$, averages are taken across five instances. Appendix \ref{Heuristic_details} provides detailed results on the objective value of the heuristic solution.

First, we discuss the quality of the solution with respect to the optimal value of the CDPP. Table \ref{Heuristic_OptimalityGap} provides the average optimality gap between the heuristic and optimal solutions for the CDPP across counties and capacities $q$. An asterisk (*) by the county name indicates that the CDPP MIP of one instance in that county was solved to an optimality gap of 1.2\%. When $n=100$ and $q\geq 3$, we face computational limitations when solving the CDPP and,  therefore, the optimality gap in this case cannot be evaluated. Otherwise,  the value of the heuristic solution is on average within 5.5\% of the CDPP optimal value. For each county, the heuristic performs best for $q=2$ packages and worst for $q=1$ package. 

\begin{table}[h]
\renewcommand{\arraystretch}{0.75}
\centering
\begin{tabular}{lrrrr}
\toprule
& \multicolumn{4}{c}{\boldmath $q$} \\
\cmidrule(lr){2-5}
\textbf{County} & \multicolumn{1}{c}{\textbf{1}} & \multicolumn{1}{c}{\textbf{2}} & \multicolumn{1}{c}{\textbf{3}} & \multicolumn{1}{c}{\textbf{4}}\\
\midrule 
 Cook & &&&\\
 \quad $n=50$ & 5.5\% & 1.3\% & 3.4\% & 5.5\%\\
 \quad $n=100$ & 4.9\% & 0.5\% & - & - \\
 \hline
 Adams & &&&\\
 \quad $n=50$& 4.0\% & 2.2\% & 3.1\% & 3.6\%\\
\quad $n=100$ & 4.9\% & 2.6\% & - & - \\ 
\hline  
 Cumberland &&&&\\
 \quad $n=50$ & 1.6\% & 1.5\% & 1.5\% & 1.5\% \\
\quad $n=100$* & 1.2\% & 1.1\% & - & - \\
\bottomrule
\end{tabular}
\caption{\boldmath Average optimality gap (\%) between the two-echelon location-routing heuristic and optimal solution for the CDPP across various capacities of the delivery person $q$ in the base case.} \label{Heuristic_OptimalityGap}
\end{table}

Next, we discuss the runtime for the two-echelon location-routing heuristic. On average, the PA-R takes at most 0.6 minutes and the SSA takes at most 0.3 minutes. In total, the average runtime of the two-echelon location-routing heuristic is at most 0.7 minutes. Thus, the two-echelon location-routing heuristic finds high quality solution quickly.

Section \ref{Experimental Results} shows that including the search time for parking in routing optimization by using the CDPP for last-mile delivery is advantageous in reducing the completion time of the delivery tour. We show that the two-echelon location-routing heuristic outperforms the benchmark problems in Section \ref{Benchmarks} for most cases, highlighting that this heuristic provides an improvement to industry practice and models in the literature at low computational time. Table \ref{Benchmarks_OptimalityGap} shows the average optimality gap between the benchmark models in Section \ref{Benchmarks} and the optimal CDPP value for $n=50$ across various capacities of the delivery person $q$ in the base case. A single asterisk $(^*)$ indicates that 9 out of 10 instances for the benchmark model being considered solved to optimality. A double asterisk $(^{**})$ indicates that 8 out of 10 instances for the benchmark model being considered solved to optimality. We also include the optimality gap for the two-echelon location routing heuristic. For each county and capacity, the model that achieved the lowest optimality gap is bolded. The two-echelon location-routing heuristic significantly outperforms all benchmarks in Cook and Adams counties. For Cumberland County, the two-echelon location-routing heuristic solution outperforms all benchmarks aside from the Modified TSP supporting the conclusion that driving the TSP and parking at every customer may be a sufficient solution in rural environments. This analysis supports the use of the two-echelon location-routing heuristic to realize the advantages of including the search time for parking in routing last-mile delivery.

\begin{table}[h]
\renewcommand{\arraystretch}{0.75}
\centering
\begin{tabular}{lrrr@{}lr@{}l}
\toprule
& \multicolumn{6}{c}{\boldmath \textbf{$q$}} \\
\cmidrule(lr){2-7}
\textbf{County} & \multicolumn{1}{c}{\textbf{1}} & \multicolumn{1}{c}{\textbf{2}} &\multicolumn{2}{c}{\textbf{3}} & \multicolumn{2}{c}{\textbf{4}}\\
\midrule 
 Cook & &&&&&\\
 \quad No parking time & 77.0\% & 98.8\% & 113.3\% && 122.1\% &\\
 \quad Modified TSP & 13.9\% &  13.7\% & 12.9\% & & 12.8\%& \\
 \quad Relaxed M-S $\alpha = 0.6$ & 75.6\% & 97.8\% & 111.7\% && 121.1\% &\\
  \quad Relaxed M-S $\alpha = 0.8$ & 65.6\% & 79.9\% & 93.0\%& & 100.0\% &$^*$\\
  \quad Two-Echelon Location-Routing Heuristic & \textbf{5.5\%} & \textbf{1.3\%} &\textbf{ 3.4\%} & & \textbf{5.5\%} &\\
\hline
 Adams & &&&&&\\
 \quad No parking time & 34.8\% & 39.9\% & 42.4\% && 43.4\% &\\
 \quad Modified TSP & 7.4\% & 7.0\% & 6.4\%&& 5.7\%&\\
  \quad Relaxed M-S $\alpha = 0.6$ & 33.6\% & 38.0\% & 41.0\% & & 40.8\% & \\
 \quad Relaxed M-S $\alpha = 0.8$ & 31.0\% & 33.7\% & 36.3\% &$^*$ & 33.7\% & $^{**}$\\
  \quad Two-Echelon Location-Routing Heuristic & \textbf{4.0\%} & \textbf{2.2\%} & \textbf{3.1\%} & & \textbf{3.6\%} &\\
 \hline
 Cumberland &&&&\\
 \quad No parking time & 4.1\% & 4.2\% & 4.2\% && 4.2\% &\\
  \quad Modified TSP & \textbf{0.9\%} & \textbf{0.7\%} & \textbf{0.6\%}&& \textbf{0.6\%} &\\
  \quad Relaxed M-S $\alpha = 0.6$ & 3.2\% & 3.5\% & 3.1\% && 3.2\% &\\
 \quad Relaxed M-S $\alpha = 0.8$ & 6.1\% & 7.0\% & 6.9\% &$^*$ & 8.0\%& $^{**}$ \\
 \quad Two-Echelon Location-Routing Heuristic & 1.6\% & 1.5\% & 1.5\% && 1.5\% &\\
\bottomrule
\end{tabular}
\caption{Average optimality gap (\%) between the benchmark models in Section \ref{Benchmarks} and the optimal CDPP value for $n=50$ across various capacities of the delivery person $q$ in the base case.} \label{Benchmarks_OptimalityGap}
\end{table}

 \section{Conclusions and Future Work} \label{Conclusions_FutureWork}

Yes, parking matters. Including the search time for parking in the objective for routing optimization for last-mile delivery may significantly reduce the completion time of the delivery tour. Using the CDPP for routing decisions provides the greatest advantage in urban environments where parking is a challenge and often time consuming. When parking is a challenge, the CDPP solution recommends that the delivery person park in fewer locations and serve multiple customer service sets from the same parking spot. This decision balances the trade-offs of walking to service customers and driving to find a new parking location. However, in rural environments where parking is more readily available, a solution that parks at all customers serving each customer individually may be sufficient. 

These insights are reflected in our analytical results that determine when the search time for parking becomes large enough that the TSP solution parking at every customer location is not an optimal solution to the CDPP. These results show that in urban environments, where customers are close together, small search times for parking impact the structure of the optimal solution in the CDPP. A TSP solution that parks at every customer best approximates optimal solutions to the CDPP in rural environments where customers are further apart.

To solve reasonably-sized instances of the CDPP, this paper introduces several valid inequalities and a variable reduction technique that improves computational performance of the CDPP. However, further work needs to be done to control the growth in the number of variables in the model, particularly the service variables $y_{ij}$ which grows both in the number of customers as well as the number of potential service sets. A column generation approach may improve computational performance. For instances where the model becomes intractably large, we propose a two-echelon location-routing heuristic and show that this heuristic finds high quality solutions quickly. These heuristic solutions outperform other traditional last-mile delivery models providing an immediate improvement to industry practice and models in the literature. 

This work provides immediate ways to improve routing a single vehicle for last-mile delivery. Future work includes considering the impact of parking under additional delivery constraints, such as customer time windows and additional service times, as well as the consideration of a fleet of vehicles. In the experimental design, we assume that the parking locations are restricted to customer locations. However, it may be the case that the vehicle cannot park at every customer and/or there exists parking locations outside of customer locations that should be considered. Future work includes the analysis of further restricting or increasing parking locations in the CDPP.

We model last-mile delivery in a deterministic framework to build insights on the impact that the search time for parking has on routing decisions. However, we know parking is stochastic and can vary by location and time of day. In Seattle, \cite{Cruise_for_parking} conclude that the cruise time for parking decreases when more curb-space is allocated to commercial loading zones and paid parking and increases when more curb-space is allocated to bus zones. A survey of 16 drivers in New York City finds that the search time for parking ranges from 3 minutes in Brooklyn to 60 minutes in Midtown East \citep{Survey_Drivers}. Future work includes the analysis of spatial changes in the search time for parking. In addition, future work includes generalizing the CDPP to consider temporal changes in search time for parking as well as the impact of parking in a stochastic framework. 

An understanding of the availability of parking locations and its impact on delivery practices also benefits urban planning efforts. Currently, many urban areas are experimenting with loading zone reservations and pricing schemes for delivery vehicles \citep{DC_curbs, BostonParking}. For example, a pilot program in Aspen, Colorado, allows commercial drivers to reserve and pay for the use of ``Smart Zones" with a mobile app \citep{AspenParking}. Insights into how parking impacts routing in last-mile delivery may allow for better placement of loading zones and better pricing schemes to incentivize drivers and yield better curbside management.

\bibliographystyle{plainnat}
\bibliography{References}

%
%
%

\newpage 
 \begin{appendices}
 
 \section{Proofs} \label{Proofs}
 
 \subsection{Proof of Claim \ref{Grid_q2}}
 \begin{proof}
To conclude part (a), we begin by providing a lower bound on a solution to the CDPP where the delivery person parks $k$ times for $k < n$. Then, we show this bound is bounded below by the objective value for the TSP solution where the delivery person parks at every customer, given in Equation \eqref{TSP_obj_value_eq}. Thus, the TSP solution parking at every customer location is an optimal solution to the CDPP. 

The CDPP objective value in Equation \eqref{CDPPobj} decomposes into the time the delivery person spends searching for parking, driving, walking, and loading packages. Assume the delivery person parks $k$ times. Then, the search time for parking is $kp$. \cite{AutonomousGrid} show that the closest customer to the depot is unique and $D(0,c_2) = MinDistance + 1$ for each customer $c_2$ in the set of second closest customers to the depot. When $k>1$, driving time to and from the depot is minimized if the delivery person enters the grid at the closest customer and exits the grid at a second closest customer to the depot. Thus, driving time to and from the grid is bounded below by $(2\cdot MinDistance +1)\hat{d}\hat{l}$ minutes. The case where $k=1$ is addressed at the end of this discussion. Further, the driving time between the $k$ parking spots is bounded below by $(k-1)\hat{d}\hat{l}$ minutes as the parking locations are at least one unit apart. In total, the delivery person drives at least $(2\cdot MinDistance + k)\hat{d}\hat{l}$ minutes. Since the delivery person parks $k$ times, $n-k$ customers are serviced from a parking spot that is not the customer location. When $q\leq 2$, the delivery person walks at least $2\hat{w}\hat{l}$ minutes per customer as the customer is at least one block away from the parking location and must also return back to the parking location. In total, the delivery person walks at least $2\hat{w}\hat{l}(n-k)$ minutes. In summary, Equation \eqref{LB_q12} gives a lower bound on a solution where the delivery person parks $k$ times,
\begin{align} \label{LB_q12}
kp + (2\cdot MinDistance + k)\hat{d}\hat{l} + 2\hat{w}\hat{l}(n-k) + nf.
\end{align}
We show that the lower bound in Equation \eqref{LB_q12} holds for $k=1$. Note that when $k=1$, driving time to and from the single parking spot is bounded below by $(2\cdot MinDistance)\hat{d}\hat{l}$ minutes. However, for at least one customer, the delivery person must walk at least $4w$ units. Therefore, the lower bound when $k=1$ is given in the following equation: 
\begin{align} \nonumber 
& p + (2\cdot MinDistance)\hat{d}\hat{l} + 2\hat{w}\hat{l}(n-2) + 4\hat{w}\hat{l} + nf \\ \label{k_1_a}
& \hspace{10em}\geq p + (2\cdot MinDistance)\hat{d}\hat{l} + 2\hat{w}\hat{l}(n-1) + 2\hat{w}\hat{l} + nf \\ \label{k_1_b}
& \hspace{10em}\geq p + (2\cdot MinDistance + 1)\hat{d}\hat{l} + 2\hat{w}\hat{l}(n-1)+ nf. 
\end{align}
Equation \eqref{k_1_a} redistributes the $4\hat{w}\hat{l}$ term. Equation \eqref{k_1_b} follows from the assumption $\hat{w}>\hat{d}$, or equivalently $2\hat{w}\hat{l}>2\hat{d}\hat{l}>\hat{d}\hat{l}$. Thus, the lower bound in Equation \eqref{LB_q12} holds for $k=1$, as well. 

Now, we show the lower bound in Equation \eqref{LB_q12} is bounded below by the objective value in Equation \eqref{TSP_obj_value_eq}. By adding $0 = (n-k)\hat{d}\hat{l} - (n-k)\hat{d}\hat{l}$ to Equation \eqref{LB_q12}, it follows
\begin{align} \nonumber
 &kp + (2\cdot MinDistance + k + (n-k) - (n-k))\hat{d}\hat{l} + 2\hat{w}\hat{l}(n-k) + nf \\ \label{LB_q12_a}
 & \hspace{10em} \leq kp + (n-k)p + (2\cdot MinDistance + n)\hat{d}\hat{l} + nf \\ \label{LB_q12_b} 
&\hspace{10em} = np+ (2\cdot MinDistance + n)\hat{d}\hat{l} + nf.
\end{align}
Equation \eqref{LB_q12_a} uses the assumption that $p \leq \hat{l}(2\hat{w}-\hat{d})$, or equivalently $p(n-k) \leq 2\hat{w}\hat{l}(n-k)-\hat{d}\hat{l}(n-k)$. Then, Equation \eqref{LB_q12_b} simplifies Equation \eqref{LB_q12_a}. Lemma \ref{TSP_obj_value} concludes that Equation \eqref{LB_q12_b} is the objective value to the TSP solution where the delivery person parks at every customer location. Thus, the lower bound to a solution where the delivery person parks $k$ times is bounded below by this TSP solution and we conclude an optimal solution to the CDPP is the TSP solution where the delivery person parks at every customer location.

To conclude part (b), we construct a solution that has a lower objective value than the TSP solution where the delivery person parks at every customer location. Thus, this TSP solution is not an optimal solution to the CDPP. 

First, we construct a solution to the CDPP. Figure \ref{6_6_Grid_ConstructedSols}(a) shows an example of this constructed solution on a $6 \times 6$ grid of customers. Dashed green lines represent walking paths for the delivery person. Consider the complete grid of customers $\sqrt{n} \times \sqrt{n}-1$ with the bottom left corner of the grid at $(1,1)$, bottom right corner at $(\sqrt{n},1)$, top left corner at $(1, \sqrt{n}-1)$, and top right corner at $(\sqrt{n}, \sqrt{n}-1)$. Since $\sqrt{n}(\sqrt{n}-1)$ is even, \cite{AutonomousGrid} show there exists a Hamiltonian path from $(1,1)$ to $(1, 2)$ through the solid rectangular grid. In Figure \ref{6_6_Grid_ConstructedSols}(a), this path is shown with the solid, blue driving route. The delivery person parks at each customer traversed on this path. The remaining $\sqrt{n}$ customers are located on the line from $(1, \sqrt{n})$ to $(\sqrt{n}, \sqrt{n})$. For each $a \in \{1,2,...,\sqrt{n}\}$, the customer located at $(a, \sqrt{n})$ is serviced from the parking spot $(a,\sqrt{n}-1)$. Equation \eqref{ObjValue_ConstructedSolution_q2} gives the objective value for this constructed solution
\begin{align} \nonumber 
&(2\cdot MinDistance + 1 +\sqrt{n}(\sqrt{n}-1)-1) \hat{d}\hat{l} + \sqrt{n}(\sqrt{n}-1)p+ 2\hat{w}\hat{l}\sqrt{n} +nf\\ \label{ObjValue_ConstructedSolution_q2}
&\hspace{10em} =(2\cdot MinDistance + n - \sqrt{n}) \hat{d}\hat{l} + (n-\sqrt{n})p+ 2\hat{w}\hat{l}\sqrt{n} + nf.
\end{align}
Then, it follows 
\begin{align} \nonumber
&(2\cdot MinDistance + n - \sqrt{n}) \hat{d}\hat{l} + (n-\sqrt{n})p+ 2\hat{w}\hat{l}\sqrt{n} + nf \\ \label{Const_Sol_q2_1}
&\hspace{10em} < (2\cdot MinDistance + n) \hat{d}\hat{l} + (n-\sqrt{n})p + nf + p\sqrt{n} \\ \label{Const_Sol_q2_2}
&\hspace{10em} =(2\cdot MinDistance + n) \hat{d}\hat{l} + np + nf.
\end{align}
Equation \eqref{Const_Sol_q2_1} uses the assumption $p>\hat{l}(2\hat{w}-\hat{d})$, or equivalently $p\sqrt{n}>2\hat{w}\hat{l}\sqrt{n}-\hat{d}\hat{l}\sqrt{n}$. Then, Equation \eqref{Const_Sol_q2_2} simplifies Equation \eqref{Const_Sol_q2_1}. Lemma \ref{TSP_obj_value} concludes that Equation \eqref{Const_Sol_q2_2} is the objective value to the TSP solution where the delivery person parks at every customer location. Thus, this constructed solution has a lower objective value than the TSP solution where the delivery person parks at every customer location concluding that this TSP solution is not an optimal solution to the CDPP. 
\end{proof}

\begin{figure}[h]
  \centering
\begin{subfigure}{.5\textwidth}
\centering
  \includegraphics[scale = 0.4]{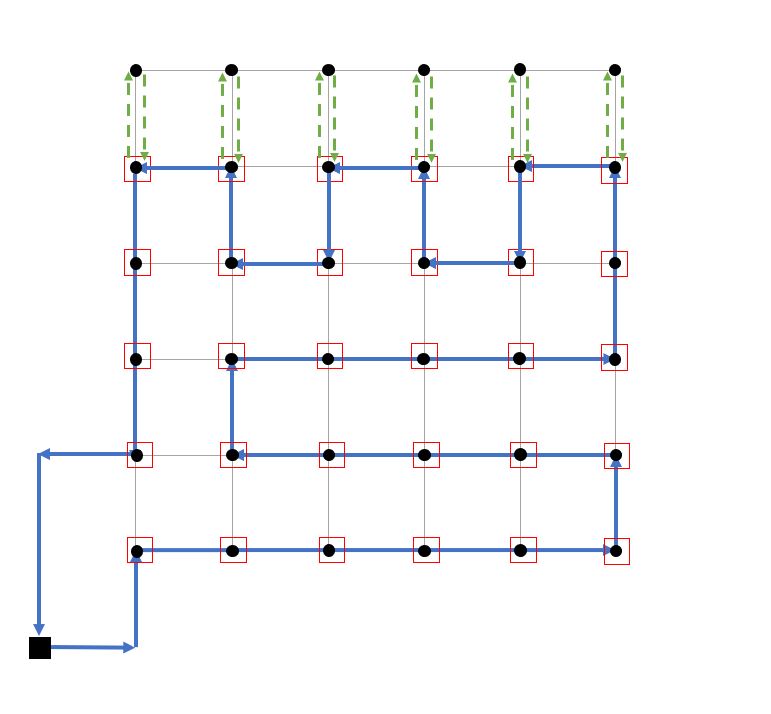}
  \caption{$q\leq2$ and $p>\hat{l}(2\hat{w}-\hat{d})$}
\end{subfigure}%
\begin{subfigure}{.5\textwidth}
  \centering
  \includegraphics[scale=0.4]{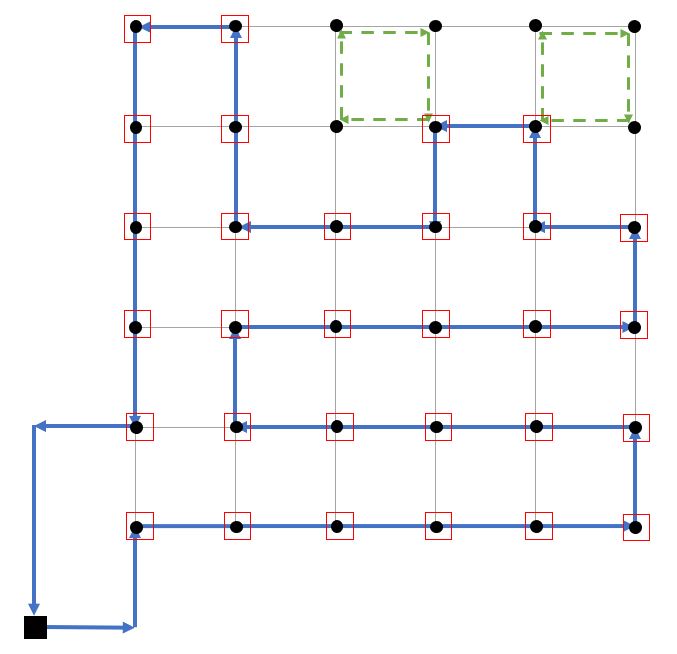}
\caption{$q=3$ and $p>\hat{l}(\frac{4}{3}\hat{w}-\hat{d})$} 
 \end{subfigure}
  \caption{Constructed solutions that achieve a lower objective value than the TSP solution parking at every customer when (a) $q\leq2$ and $p>\hat{l}(2\hat{w}-\hat{d})$  or (b) $q=3$ and $p>\hat{l}(\frac{4}{3}\hat{w}-\hat{d})$ on a $6 \times 6$ grid of customers. }
\label{6_6_Grid_ConstructedSols}
\end{figure}

\subsection{Proof of Claim \ref{Grid_q3}}

\begin{proof}
This proof takes the same approach outlined in the proof of Claim \ref{Grid_q2}.

To conclude part(a), we begin by providing a lower bound on a solution to the CDPP where the delivery person parks $k$ times for $k<n$. The search time for parking is $kp$. The arguments for the lower bound on the driving time presented in the proof of Claim \ref{Grid_q2}(a) hold here. The delivery person drives at least $(2\cdot MinDistance + k)\hat{d}\hat{l}$ minutes. Since the delivery person parks $k$ times, $n-k$ customers are serviced from a parking spot that is not the customer location. If the delivery person serves one or two customers in a service set, then the delivery person walks at least $2\hat{w}\hat{l}$ minutes per customer as the customer is at least one block away from the parking location and must also return back to the parking location. If the delivery person serves three customers in a service set, the delivery person walks at least $4\hat{w}\hat{l}$ minutes to walk to each customer and return back to the parking location. To find a lower bound on the walking time, we maximize the consolidation efforts to minimize the number of return walks to parking locations. Find $g,h \in \mathbb{Z}$ where $h<3$ and $n-k = 3g + h$. Then, the delivery person walks at least $4g + 2h$ units. In summary, Equation \eqref{LB_q3} gives a lower bound on a solution where the delivery person parks $k$ times,
\begin{align} \label{LB_q3}
kp + (2\cdot MinDistance + k)\hat{d}\hat{l} + (4g+2h)\hat{w}\hat{l}+ nf.
\end{align}
We show that the lower bound in Equation \eqref{LB_q3} holds for $k=1$. Note that when $k=1$, driving time to and from the single parking spot is bounded below by $(2\cdot MinDistance)\hat{d}\hat{l}$ minutes. However, for at least one set of three customers, the parking location is at least one unit away so the delivery person must walk at least $6w$ units. Therefore, the lower bound when $k=1$ is given in the following equation: 
\begin{align} \label{k_1_a_q3} 
p + (2\cdot MinDistance)\hat{d}\hat{l} + 6w\hat{w}\hat{l} + 4(g-1)\hat{w}\hat{l} + 2h\hat{w}\hat{l} + nf 
\end{align}
Equation \eqref{k_1_a_q3} is still bounded below by Equation \eqref{LB_q3}. Thus, the lower bound in Equation \eqref{LB_q3} holds for $k=1$, as well. 

Now, we show the lower bound in Equation \eqref{LB_q3} is bounded below by the objective value in Equation \eqref{TSP_obj_value_eq}. By adding $0 = (n-k)\hat{d}\hat{l} - (n-k)\hat{d}\hat{l}$ to Equation \eqref{LB_q3}, it follows
\begin{align} \nonumber
&kp + (2\cdot MinDistance + k + (n-k) - (n-k))\hat{d}\hat{l} + (4g+2h)\hat{w}\hat{l}+ nf \\ \label{lb_TSP_1}
&\hspace{4em} = kp + (2\cdot MinDistance + k + (n-k) - (3g+h))\hat{d}\hat{l} + (4g+2h)\hat{w}\hat{l}+ nf \\\label{lb_TSP_2}
& \hspace{4em} \geq kp + (2\cdot MinDistance + k + (n-k) - (3g+h))\hat{d}\hat{l} + (4g+\frac{4}{3}h)\hat{w}\hat{l}+ nf \\ \label{lb_TSP_3}
& \hspace{4em} \geq kp + (2\cdot MinDistance + k + (n-k) - (3g+h))\hat{d}\hat{l} + nf + (3g+h)p + (3g + h)\hat{d}\hat{l} \\ \label{lb_TSP_4}
& \hspace{4em} = kp + (2\cdot MinDistance + n)\hat{d}\hat{l} + nf + (3g+h)p \\ \label{lb_TSP_5}
& \hspace{4em} = np + (2\cdot MinDistance + n)\hat{d}\hat{l} + nf.
\end{align}
Equation \eqref{lb_TSP_1} uses that $n-k = 3g+h$. Then, Inequality \eqref{lb_TSP_2} uses that $2h\hat{w}\hat{l}\geq \frac{4}{3}h\hat{w}\hat{l}$. Inequality \eqref{lb_TSP_3} uses the assumption that $p \leq \hat{l}(\frac{4}{3}\hat{w}-\hat{d})$, or equivalently $3gp + 3g\hat{d}\hat{l} \leq 4g\hat{w}\hat{l}$ and $hp + h\hat{d}\hat{l} \leq \frac{4}{3}h\hat{w}\hat{l}$. Then, Equation \eqref{lb_TSP_4} simplifies Equation \eqref{lb_TSP_3}. Finally, Equation \eqref{lb_TSP_5} simplifies Equation \eqref{lb_TSP_4} using $n-k = 3g+h$. Lemma \ref{TSP_obj_value} concludes that Equation \eqref{lb_TSP_5} is the objective value to the TSP solution where the delivery person parks at every customer location. Thus, the lower bound to a solution where the delivery person parks $k$ times is bounded below by this TSP solution and we conclude an optimal solution is the TSP solution where the delivery person parks at every customer location.

To conclude part (b), we construct a solution that has a lower objective value than the TSP solution where the delivery person parks at every customer location. Algorithm \ref{q=3 Algorithm} constructs a particular solution when $q=3$ packages and Figure \ref{6_6_Grid_ConstructedSols}(b) shows an example of this constructed solution on a $6 \times 6$ grid of customers. This solution parks at $n-6$ customers for a total search time for parking of $p(n-6)$. As discussed in Steps \ref{DriveOver1Walk} and \ref{DriveOver2Walk} in Algorithm \ref{q=3 Algorithm}, the delivery person serves two sets of three customers for a total walking time of $8\hat{l}\hat{w}$. In total, the delivery person drives $2\cdot MinDistance + n - 6$ units for a total driving time of $(2\cdot MinDistance + n - 6)\hat{l}\hat{d}$. Equation \eqref{Constructed_Sol_obj_q3} gives the objective value for this constructed solution
\begin{align} \label{Constructed_Sol_obj_q3}
(2\cdot MinDistance + n - 6)\hat{l}\hat{d} + (n-6)p + 8\hat{l}\hat{w} + nf.
\end{align}
Then, it follows 
\begin{align} \nonumber
&(2\cdot MinDistance + n - 6)\hat{l}\hat{d} + (n-6)p + 8\hat{l}\hat{w} + nf\\ \label{Const_Sol_q3_1}
&\hspace{10em}<(2\cdot MinDistance + n - 6)\hat{l}\hat{d} + (n-6)p + nf + 6p + d\hat{l}\hat{d} \\ \label{Const_Sol_q3_2}
&\hspace{10em}= (2\cdot MinDistance + n)\hat{l}\hat{d} + np + nf.
\end{align}
Equation \eqref{Const_Sol_q3_1} uses the assumption $p>\hat{l}(\frac{4}{3}\hat{w}-\hat{d})$, or equivalently $6p + 6d >8\hat{w}\hat{l}$. Then, Equation \eqref{Const_Sol_q3_2} simplifies Equation \eqref{Const_Sol_q3_1}. Lemma \ref{TSP_obj_value} concludes that Equation \eqref{Const_Sol_q3_2} is the objective value to the TSP solution where the delivery person parks at every customer location. Thus, this constructed solution has a lower objective value than the TSP solution where the delivery person parks at every customer location concluding that this TSP solution is not an optimal solution to the CDPP. 
\end{proof}

\begin{algorithm}[h]
\caption{Constructed Solution on $\sqrt{n} \times \sqrt{n}$ complete grid when $q=3$} \label{q=3 Algorithm}
\algsetup{indent = 2em}
\begin{algorithmic} [1]
\STATE \textbf{Input:} Number of customers $n = \sqrt{n}\times \sqrt{n}$ where $\sqrt{n}$ is even
\STATE \textbf{Output:} Feasible Solution for CDPP when $q=3$

\STATE \label{DriveToGrid} Drive from depot at $(0,0)$ and park at customer location $(1,1)$.
\STATE \label{DrivePark} Drive and park at each customer location from $(1,1)$ to $(\sqrt{n},1)$.
\STATE \label{ZigZag}
for $i \in \{1,..., \frac{\sqrt{n}-4}{2}\}$: \\
\hspace{0.5cm} Drive and park at customer location $(\sqrt{n}, 2i)$. \\
\hspace{0.5cm} Drive and park at each customer location from $(\sqrt{n},2i)$ to $(2,2i)$. \\
\hspace{0.5cm} Drive and park at customer location $(2, 2i+1)$. \\
\hspace{0.5cm} Drive and park at each customer location from $(2,2i+1)$ to $(\sqrt{n},2i+1)$.
\STATE \label{DriveUp}
Drive and park from customer location $(\sqrt{n},n-3)$ to customer location $(\sqrt{n},n-2)$.
\STATE \label{DriveOver1}
Drive and park at customer location $(\sqrt{n}-1,n-2)$.
\STATE \label{DriveOver1Walk}
Drive and park at customer location $(\sqrt{n}-1,\sqrt{n}-1)$. Serve this customer individually. Then, from this parking location, walk to service the customer service set $\{(\sqrt{n}-1,\sqrt{n}), (\sqrt{n},\sqrt{n}), (\sqrt{n},\sqrt{n}-1)\}$.
\STATE \label{DriveOver2Walk}
Drive and park at customer location $(\sqrt{n}-2,\sqrt{n}-1)$. Serve this customer individually. Then, from this parking location, walk to service the customer service set $\{(\sqrt{n}-2,\sqrt{n}), (\sqrt{n}-3,\sqrt{n}), (\sqrt{n}-3,\sqrt{n}-1)\}$.
\STATE \label{DriveDown}
Drive and park at customer location $(\sqrt{n}-2, \sqrt{n}-2)$.
\STATE \label{DriveLeft}
Drive and park at customer location $(\sqrt{n}-3, \sqrt{n}-2)$.
\STATE \label{ZigZag2}
for $i \in \{1,..., \frac{\sqrt{n}-4}{2}\}$: \\
\hspace{0.5cm} Drive and park at customer location $(\sqrt{n}-4+2i, \sqrt{n}-2)$. \\
\hspace{0.5cm} Drive and park at each customer location from $(\sqrt{n}-4+2i, \sqrt{n}-2)$ to $(\sqrt{n}-4+2i,\sqrt{n})$. \\
\hspace{0.5cm} Drive and park at customer location $(\sqrt{n}-5+2i, \sqrt{n})$. \\
\hspace{0.5cm} Drive and park at each customer location from $(\sqrt{n}-5+2i, \sqrt{n})$ to $(\sqrt{n}-5+2i, \sqrt{n}-2)$. \\
\STATE \label{LastDown} Drive and park at each customer location from $(1, \sqrt{n}-2)$ to $(1,1)$. \\
\STATE \label{DriveToDepot} Drive from customer location $(1,1)$ to depot at $(0,0)$. 
\end{algorithmic}
\end{algorithm}

\subsection{Proof of Claim \ref{park_set}}

\begin{proof}
Assume the delivery person parks at $i \in \Pi \cap C$ but serves $i$ while parked at a different parking location $l \in \Pi \setminus\{i\}$. Therefore, there exists $\sigma_j \in J_i$ such that $y_{lj} = 1$. Take $v = |\sigma_j|$ and denote ($c_1,...,c_v$) to be an optimal order to service set $\sigma_j$ when parked at $l$. Note that $c_I = i$ for some $I \in \{1,...,v\}$.  After serving customer $c_v$, the delivery person returns to parking spot $l$ making a cycle. Now, we consider two cases: $l \in \sigma_j$ and $l \notin \sigma_j$. If $l \in \sigma_j$, an optimal order to serve set $\sigma_j$ when parked at $i$ is ($c_I, c_{I+1},...,c_v,l,c_1,..., c_{I-1}$). In this case, $w_{ij} = w_{lj}$ giving an equivalent solution. If $l \not \in \sigma_j$, it follows 
\begin{align} \label{park_set_1}
w_{lj} &= W(l, c_1) + W(c_1,c_2)+\cdots +W(c_{I-1}, c_I)+W(c_I,c_{I+1})+\cdots + W(c_{v-1}, c_v) + W(c_v, l) \\\label{park_set_2}
&\geq W(c_1,c_2)+\cdots +W(c_{I-1}, c_I)+W(c_I,c_{I+1})+\cdots + W(c_{v-1}, c_v) + W(c_v, c_1) \\ \label{park_set_3}
& = W(c_I,c_{I+1})+\cdots + W(c_{v-1}, c_v) + W(c_v, c_1) +W(c_1,c_2)+\cdots +W(c_{I-1}, c_I) \\ \label{park_set_4}
&\geq w_{ij}.
\end{align}
Equation \eqref{park_set_1} follows by the definition of the service times in Section \eqref{ServiceTimes}. Inequality \ref{park_set_2} holds by the triangle inequality on the walking time. Equation \eqref{park_set_3} rearranges the terms and Inequality \eqref{park_set_4} follows by the definition of service times in Section \ref{ServiceTimes}. Thus, serving set $\sigma_j$ from parking spot $l$ is an upper bound on the solution that serves $\sigma_j$ from parking spot $i$. In conclusion, if the delivery person parks at $i \in \Pi \cap C$, then there exists $\sigma_j \in J_i$ such that $y_{ij} = 1$. 
\end{proof}

\subsection{Proof of Claim \ref{f0_Single}}

\begin{proof}
Assume $x_{ki}=1$ for some $k \in \Pi \setminus \{i\}$ but $y_{ij_i} = 0$ in an optimal solution. We will show there exists an equivalent solution to the CDPP such that $y_{ij_i} =1$. Since $x_{ki} = 1$, Claim \ref{park_set} concludes there exists $\sigma_j \in J_i$ such that $y_{ij} = 1$. By assumption, $\sigma_j \neq \sigma_{j_i}$. Define $\sigma_l = \sigma_j \setminus \{i\}$. Observe that $\sigma_j = \sigma_{j_i} \cup \sigma_l$. By the definition of $w_{ij}$ in Section \ref{ServiceTimes}, it follows $w_{ij} = w_{ij_i} + w_{il}$. Since $f_j$ is linearly dependent on the number of packages, it follows $f_j = f_{j_i} + f_l$. In addition, $\sigma_{j_i}$ and $\sigma_{j_l}$ satisfy any capacity constraints as subsets of $\sigma_j$. Therefore, the objective value of the solution that services $\sigma_j$ when parked at $i$ is equivalent to the solution that services $\sigma_{j_i}$ and $\sigma_l$ when parked at $i$. 
\end{proof}

\subsection{Proof of Corollary \ref{f0_y0}}

\begin{proof}
Let $\bar{J} = \{\sigma_j \in J_i\vert \textrm{ } |\sigma_j| \geq 2\}$. We consider two cases: the vehicle parks at $i$ and the vehicle does not park at $i$. If the vehicle parks at customer $i$, Claim \ref{f0_Single} concludes $y_{ij_i} = 1$. By Constraints \eqref{CDPPMember}, it follows $y_{kj} = 0$ for all $(k,j) \in \Pi \times J_i$ except $(i,j_i)$. Since $j_i \notin \bar{J}$, $y_{ij} = 0$ for all $\sigma_j \in \bar{J}$. If the vehicle does not park at $i$, then $\sum_{k \in \Pi} x_{ki} = 0$. By Constraints \eqref{CDPPVisit}, $y_{ij} = 0$ for all $\sigma_j \in S$. Since $\bar{J} \subset S$, $y_{ij} = 0$ for all $\sigma_j \in \bar{J}$. 
\end{proof}

\section{General Model Improvements} \label{AdditionalModelImprovements}

In this appendix, we present model improvements that apply to the CDPP with any loading time function $f_j$. The results of Section \ref{ModelImprovements} are restricted to when the loading time $f_j$ is linearly dependent on the number of packages in the service set. First, we note that Claim \ref{park_set} in Section \ref{ModelImprovements} holds for any loading time function $f_j$. Here, we provide additional general results  and point out that these results are strengthened in Section \ref{ModelImprovements} when considering a specific loading time function (i.e. $f_j = f \cdot |\sigma_j|$). 

We strengthen the relationship between the $x_{ik}$ variables and $y_{ij}$ variables by identifying the purpose of the parking location. The delivery person parks the vehicle to service customers on foot. If the delivery person parks at parking location $i \in \Pi \setminus \{0\}$, Claim \ref{ParkServe} shows a set of customers is serviced while parked at customer $i$ in an optimal solution.  

\begin{claim} \label{ParkServe}
If the delivery person parks at customer $i \in \Pi \setminus \{0\}$ (i.e. $x_{ki}=1$ for some $k \in \Pi \setminus \{i\}$), then there exists $\sigma_j \in S$ such that $y_{ij}=1$ in an optimal solution, i.e. 
\begin{align} \label{ParkServeVI}
x_{ki} \leq \sum_{\sigma_j\in S} y_{ij} && \forall i \in \Pi \setminus \{0\}, k \in \Pi \setminus \{i\}.
\end{align}
\end{claim}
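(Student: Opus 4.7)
The plan is to prove Claim \ref{ParkServe} by contradiction, using a shortcutting argument that exploits the triangle inequality on driving times. Suppose there is an optimal solution in which $x_{ki}=1$ for some $k\in \Pi\setminus\{i\}$ but $\sum_{\sigma_j\in S} y_{ij}=0$, i.e.\ the vehicle parks at $i$ but serves no customer set from $i$. By Constraints \eqref{CDPPComeLeave}, there is a unique successor $l\in \Pi\setminus\{i\}$ with $x_{il}=1$; note $l\neq k$ because each parking location is visited at most once in the tour. I would then construct a modified feasible solution $(x',y',v')$ by \emph{shortcutting} the visit to $i$: set $x'_{ki}=x'_{il}=0$, $x'_{kl}=1$, and keep $y'=y$.

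The key computation is to compare the objective values. Since $\sum_{\sigma_j\in S} y_{ij}=0$, the change in flow at $i$ required by Constraints \eqref{Alternative_Subtour3} is zero, so we can route the same amount of flow directly on the new arc $k\to l$, i.e.\ set $v'_{kl}=v_{ki}=v_{il}$ and $v'_{ki}=v'_{il}=0$. This preserves Constraints \eqref{Alternative_Subtour1}--\eqref{Alternative_Subtour4} and leaves all other constraints untouched since no $y_{ij}$ variable changed. The driving cost change is
\begin{align*}
d_{kl}-d_{ki}-d_{il} \;=\; \bigl(D(k,l)-D(k,i)-D(i,l)\bigr)-p_i \;\leq\; -p_i \;\leq\; 0,
\end{align*}
by the triangle inequality on driving times (handling $l=0$ separately by noting $d_{k0}=D(k,0)$ and $d_{i0}=D(i,0)$ yields the same bound). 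Thus the modified solution is weakly better.

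If $p_i>0$ or the triangle inequality is strict, the modified solution is strictly better, contradicting optimality. If $p_i=0$ and the triangle inequality is tight, the modified solution is an equivalent optimal solution in which the vehicle does not park at $i$, so $x'_{ki}=0$ and the inequality \eqref{ParkServeVI} is satisfied trivially at $(i,k)$. Iterating this argument over all parking locations where a wasted stop occurs yields an optimal solution satisfying \eqref{ParkServeVI} everywhere.

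The main obstacle I anticipate is bookkeeping rather than mathematical depth: I must verify that the shortcut keeps the single-commodity flow constraints \eqref{Alternative_Subtour1}--\eqref{Alternative_Subtour4} feasible, and that the successor $l$ is well defined and distinct from $k$ in every edge case (including $k=0$ or $l=0$). Once the reassignment $v'_{kl}:=v_{ki}$ is justified via the fact that no demand is absorbed at $i$, the triangle inequality finishes the argument cleanly.
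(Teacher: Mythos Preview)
Your proposal is correct and follows essentially the same shortcutting argument as the paper's own proof: assume a wasted stop at $i$, use flow balance (Constraints~\eqref{CDPPComeLeave}) to identify a successor $l$, and apply the triangle inequality on driving times to show that bypassing $i$ via the arc $k\to l$ yields a weakly better feasible solution. You are in fact more careful than the paper about bookkeeping (the $l=0$ case, flow-variable reassignment, and the non-strict case), so nothing further is needed.
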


\begin{proof}
Assume $x_{ki}=1$ but $y_{ij}=0$ for all $\sigma_j \in S$. By Constraints \eqref{CDPPComeLeave}, there exists $l \in \Pi \setminus \{k\}$ such that $x_{il}=1$. Therefore, the following contributes to the objective function in Equation \eqref{CDPPobj}:
\begin{align} \label{ParkServe1}
d_{ki}+d_{il} &= D(k,i) + p + D(i,l) + p \\ \label{ParkServe2}
&\geq D(k,i)+D(i,l) + p \\ \label{ParkServe3}
&\geq D(k,l) + p \\ \label{ParkServe4}
&= d_{kl}.
\end{align}
Equation \eqref{ParkServe1} follows from the definition of the service times in Section \ref{ServiceTimes}. Since $p\geq 0$, Inequality \eqref{ParkServe2} holds. Since the driving time satisfies the triangle inequality, we arrive at Inequality \eqref{ParkServe3} which by definition of the service times in Section \ref{ServiceTimes} is Equation \eqref{ParkServe4}. We conclude that an optimal solution is bounded below by a solution that drives directly from $k$ to $l$ without parking at $i$. Since $y_{ij}=0$ for all $\sigma_j \in S$, the change in the solution structure maintains the feasibility of the solution. Thus, Equations \eqref{ParkServeVI} hold for an optimal solution. 
\end{proof}

Claim \ref{ParkServe} concludes the delivery person services at least one customer set from each parking spot. A key feature of the CDPP is that the delivery person can serve multiple customer sets from the same parking spot. Therefore, the number of parking spots is less than or equal to the number of services sets in an optimal solution. Corollary \ref{parkingspots} formalizes this observation.

\begin{corollary} \label{parkingspots}
The number of parking spots is less than or equal to the number of service sets in an optimal solution, i.e.
\begin{align} \label{ParkLessThanServe}
\sum_{i \in \Pi} \sum_{k \in \Pi\setminus \{0,i\}} x_{ik} \leq \sum_{i \in \Pi\setminus \{0\}} \sum_{\sigma_j \in S} y_{ij}.
\end{align}
\end{corollary}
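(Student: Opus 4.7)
The plan is to obtain Corollary \ref{parkingspots} directly from Claim \ref{ParkServe} by summing the inequality \eqref{ParkServeVI} over all non-depot locations and then relabeling indices so that the left-hand side counts parking incidences rather than incoming driving arcs.

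First I would fix an arbitrary $i \in \Pi \setminus \{0\}$ and argue that $\sum_{k \in \Pi \setminus \{i\}} x_{ki} \leq \sum_{\sigma_j \in S} y_{ij}$. There are two cases. If $\sum_{k \in \Pi \setminus \{i\}} x_{ki} = 0$, then the inequality is immediate since the right-hand side is nonnegative. Otherwise there exists some $k \in \Pi \setminus \{i\}$ with $x_{ki} = 1$, and Claim \ref{ParkServe} gives at least one $\sigma_j \in S$ with $y_{ij} = 1$. Because the driving tour enters every non-depot node at most once (the depot's return arc is counted separately via Constraints \eqref{CDPPdepotTO} and \eqref{CDPPComeLeave} coupled with the subtour elimination constraints \eqref{Alternative_Subtour1}--\eqref{Alternative_Subtour4}), the left-hand side is bounded above by $1$, so the desired inequality again holds.

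Next, I would sum the pointwise inequality from the previous step over all $i \in \Pi \setminus \{0\}$, which produces
\begin{align*}
\sum_{i \in \Pi \setminus \{0\}} \sum_{k \in \Pi \setminus \{i\}} x_{ki} \;\leq\; \sum_{i \in \Pi \setminus \{0\}} \sum_{\sigma_j \in S} y_{ij}.
\end{align*}
Finally I would relabel the summation indices on the left by swapping the roles of $i$ and $k$; this converts $\sum_{i \in \Pi \setminus \{0\}} \sum_{k \in \Pi \setminus \{i\}} x_{ki}$ into $\sum_{i \in \Pi} \sum_{k \in \Pi \setminus \{0,i\}} x_{ik}$, matching the left-hand side of \eqref{ParkLessThanServe}.

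The only mildly subtle point is justifying that $\sum_{k \in \Pi \setminus \{i\}} x_{ki} \leq 1$ so that the per-$i$ inequality is tight enough to be useful after summation; this is the reason for invoking the tour structure rather than naively multiplying by $|\Pi \setminus \{i\}|$. Beyond this, the argument is purely a rearrangement of indices, so I do not anticipate any substantive difficulty.
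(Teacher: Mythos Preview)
Your approach is essentially the paper's: the paper presents Corollary~\ref{parkingspots} without a standalone proof, simply noting in the surrounding text that it follows from Claim~\ref{ParkServe} because each parking spot yields at least one service set; your summation argument is exactly the formalization of that remark.

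One small imprecision worth fixing: the bound $\sum_{k \in \Pi \setminus \{i\}} x_{ki} \leq 1$ is \emph{not} a consequence of Constraints~\eqref{CDPPdepotTO}, \eqref{CDPPComeLeave}, or the single-commodity subtour constraints~\eqref{Alternative_Subtour1}--\eqref{Alternative_Subtour4}; those constraints enforce flow balance and connectivity but do not rule out the vehicle entering a non-depot node twice. What actually pins down $\sum_{k} x_{ki} \leq 1$ in an \emph{optimal} solution is the triangle inequality on driving times together with $p_i \geq 0$: if the vehicle entered $i$ a second time via $k' \to i \to l'$, shortcutting to $k' \to l'$ would save $D(k',i)+p_i+D(i,l')-D(k',l') \geq 0$ without affecting feasibility, exactly the style of argument used in the proof of Claim~\ref{ParkServe}. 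Once you replace the appeal to the subtour constraints with this optimality argument, the rest of your proof goes through cleanly.
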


\section{Detailed Experimental Results} \label{Heuristic_details}

Table \ref{Heuristic_Objective Value} shows the average value of the two-echelon location-routing heuristic  across various capacities of the delivery person $q$ in the base case (location-dependent parking times and $f=2.8$ minutes). For $n=50$, averages are taken across ten instances. For comparison purposes, Figure \ref{Capacity_Comparison} shows the average optimal CDPP value for $n=50$ and $q=1$ to 4 packages. For $n=100$, averages are taken across five instances. For comparison purposes, Table \ref{CDPP_n100_Objective Value} shows the  average optimal CDPP value for $n=100$ and $q=1$ to 2 packages. 

\begin{table}[h]
\renewcommand{\arraystretch}{0.75}
\centering
\begin{tabular}{lcccccc}
\toprule
& \multicolumn{6}{c}{\boldmath \textbf{$q$}} \\
\cmidrule(lr){2-7}
\textbf{County} & \textbf{\boldmath $1$} & \textbf{\boldmath $2$} & \textbf{\boldmath $3$} & \textbf{\boldmath $4$} & \textbf{\boldmath $5$} & \textbf{\boldmath $6$}\\
\midrule 
 Cook & &&&&&\\
 \quad $n=50$ & 346.4 & 296.2 & 281.8 & 276.2 & 273.9 & 272.4\\
 \quad $n=100$ & 651.4 & 550.6 & 526.1 & 513.3 & 507.9 & 505.2 \\
\hline
 Adams & &&&&&\\
 \quad $n=50$& 308.8 & 293.0 & 290.5 & 290.3 & 290.2 & 290.2\\
\quad $n=100$ & 564.4 & 528.6 & 523.2 & 521.7 & 521.7 & 521.3 \\ 
 \hline
 Cumberland &&&&&&\\
 \quad $n=50$ & 196.0 & 195.5 & 195.5 & 195.5 & 195.5 & 195.5 \\
\quad $n=100$ & 364.9 & 364.1 & 364.0 & 364.0 & 364.0 & 364.0 \\
\bottomrule
\end{tabular}
\caption{Average value of the two-echelon location-routing heuristic  across various capacities of the delivery person $q$ in the base case.} \label{Heuristic_Objective Value}
\end{table}

\begin{table}[h]
\renewcommand{\arraystretch}{0.75}
\centering
\begin{tabular}{lcc}
\toprule
& \multicolumn{2}{c}{\boldmath \textbf{q}} \\
\cmidrule(lr){2-3}
\textbf{County} & \textbf{\boldmath $1$} & \textbf{\boldmath $2$} \\
\midrule 
 Cook & 620.8 & 547.6 \\
Adams & 537.4 & 514.2  \\
Cumberland* & 360.5 & 359.9  \\
\bottomrule 
\end{tabular}
\caption{Average optimal value of the CDPP for $n=100$ across various capacities of the delivery person $q$ in the base case.} \label{CDPP_n100_Objective Value}
\end{table}

 \end{appendices}

\end{document}